\newcommand{\kommentar}[1]{}
\newcommand\be{\begin{equation}}
\newcommand\ee{\end{equation}}
\newcommand\bea{\begin{eqnarray}}
\newcommand\eea{\end{eqnarray}}
\newcommand\bi{\begin{itemize}}
\newcommand\ei{\end{itemize}}
\newcommand\ben{\begin{enumerate}}
\newcommand\een{\end{enumerate}}
\newcommand\bc{\begin{center}}
\newcommand\ec{\end{center}}
\newcommand\ba{\begin{array}}
\newcommand\ea{\end{array}}
\newcommand{\N}{\mathbb{N}}
\newtheorem{thm}{Theorem}[section]
\newtheorem{lem}[thm]{Lemma}
\newtheorem{prop}[thm]{Proposition}
\newtheorem{exa}[thm]{Example}
\newtheorem{defi}[thm]{Definition}
\theoremstyle{definition}
\newcommand{\xt}{\widetilde{x}}
\newcommand{\Spec}{\text{Spec}\,}
\newcommand{\wh}[1]{\widehat{#1}}
\numberwithin{equation}{section}
\title{Noncatenary Unique Factorization Domains}
\author{Alexandra Bonat and S. Loepp}
\begin{document}
\maketitle


\pagestyle{plain}

\begin{abstract}

We demonstrate a class of local (Noetherian) unique factorization domains (UFDs) that are noncatenary at infinitely many places. In particular, if $A$ is in our class of UFDs, then the prime spectrum of $A$ contains infinitely many disjoint (except at the maximal ideal) noncatenary subsets. As a consequence of our result, there are infinitely many height one prime ideals $P$ of $A$ such that $A/P$ is not catenary.  We also construct a countable local UFD $A$ satisfying the property that for {\em every} height one prime ideal $P$ of $A$, $A/P$ is not catenary.
\end{abstract}


\section{Introduction}\label{section}

An overarching goal of commutative algebra is to understand the set of prime ideals (the prime spectrum) of a commutative ring. One way of understanding this set is to examine its structure as a partially ordered set (poset) under containment. Particularly, we are interested in whether or not a Noetherian ring with desirable algebraic properties can have prime spectra that behave strangely. One open question is, given a partially ordered set $X$, when is $X$ isomorphic to the prime spectrum of a Noetherian ring? This question has not been answered fully, although much progress has been made (see, for example, \cite{wiegands} for a nice survey). Relatedly, we can focus on a subset of the prime spectrum of a Noetherian ring and ask the question: given a partially ordered set $X$, can it be embedded into the prime spectrum of a Noetherian ring in a way that preserves saturated chains? It was previously conjectured that a Noetherian ring must be catenary. However, in 1956, this conjecture was disproved by Nagata in \cite{nagata_1956} when he constructed a noncatenary Noetherian domain. Then, in 1979, Heitmann proved in \cite{heitmann1979} that, given a finite partially ordered set $X$, there exists a Noetherian domain $R$ such that $X$ can be embedded into the prime spectrum of 
 $R$ in a way that preserves saturated chains. This shows that there is no limit to ``how noncatenary'' a Noetherian ring can be.

These results regarding noncatenary Noetherian rings raise the question of whether or not a Noetherian ring with nice algebraic properties can have unusual posets  embedded in its prime spectrum. In particular, one might ask whether or not a Noetherian unique factorization domain (UFD) can be noncatenary. The answer to this question was not known until Heitmann constructed a noncatenary UFD in \cite{heitmann} in 1993. Later, more examples of families of noncatenary UFDs were constructed (see, for example, \cite{small} and \cite{semendinger}). Finally, in \cite{colbert2022finite} it was shown, that, similar to Heitmann's result in \cite{heitmann1979} for Noetherian rings, given a finite partially ordered set $X$, there exists a Noetherian UFD $R$ such that $X$ can be embedded into the prime spectrum of $R$ in a way that preserves saturated chains.  It is natural to ask whether or not this result holds if $X$ is countably infinite. 


Particularly, in this article, we are interested in whether a Noetherian UFD can be ``infinitely noncatenary.'' More specifically, we ask if there exists a Noetherian UFD whose spectrum contains infinitely many disjoint noncatenary subsets.  We use ideas from \cite{semendinger}, \cite{heitmann}, and \cite{small} to show in Theorem \ref{constructionalt} that such a Noetherian UFD does indeed exist. A consequence of Theorem \ref{constructionalt} is that there exist Noetherian UFDs $A$ satisfying the property that for infinitely many height one prime ideals $P$ of $A$, the ring $A/P$ is not catenary. Moreover, in Theorem \ref{everyheightone}, we construct a four dimensional local (Noetherian) UFD such that the quotient ring at {\em every} height one prime ideal is noncatenary. Thus, we have shown that there exists a Noetherian UFD that is not only noncatenary at infinitely many places, but is, in some sense, noncatenary everywhere.

Our strategy is to start with a complete local (Noetherian) ring $T$ that is not equidimensional.  We then construct a subring $A$ of $T$ such that $A$ is a UFD whose completion is $T$ and such that there are infinitely many pairs of height one prime ideals $P$ and $Q$ of $T$ such that $P$ and $Q$ have different coheights and $A \cap P = A \cap Q \neq (0)$.  We also adjoin generators of carefully chosen prime ideals of $T$ to $A$ to ensure that there are infinitely many noncatenary subsets of prime ideals of $A$.

A motivating example is the complete local ring $T = k[[x,y,z,w,t]]/((x) \cap (y, z))$ where $k$ is a field and $x,y,z,w,t$ are indeterminates. A consequence of our main result, Theorem \ref{constructionalt}, is that $T$ has a subring that is a local (Noetherian) UFD whose prime spectrum contains the poset in Figure \ref{fig:exampleposet}, which is the poset in Figure \ref{fig:examplefiniteposet} repeated infinitely many times (see Example \ref{motivatingexample}). We also show in Theorem \ref{everyheightone} that in the case where $k = \mathbb{Q}$, $T$ has a subring that is a local (Noetherian) UFD such that every height one prime ideal is contained in a maximal saturated chain of prime ideals of length three and a maximal saturated chain of prime ideals of length four. In other words, every height one prime ideal is contained in a subset of the prime spectrum that is isomorphic to the poset in Figure \ref{fig:examplefiniteposet}. 



In Section \ref{Nsubrings}, we recall the notion of an N-subring of a complete local ring, first introduced in \cite{heitmann}. The bulk of Section \ref{Nsubrings} is devoted to proving results about N-subrings that are essential for our constructions. We state and prove our main result, Theorem \ref{constructionalt}, in Section \ref{MainResult}. In particular, we demonstrate a class of local (Notherian) UFDs such that, if $A$ is in the class, then the prime spectrum of $A$ contains infinitely many disjoint (except at the maximal ideal) noncatenary subsets. Finally, in Section \ref{noncatenaryUFD}, we construct a four dimensional countable local (Noetherian) UFD $A$ such that, for every height one prime ideal $P$ of $A$, the ring $A/P$ is not catenary. 

 \begin{figure}
         \centering
                 \begin{tikzpicture}  
  [scale=.9,auto=center,every node/.style={circle,fill=black}] 
    
   \node (a1) at (2,1) {};  
  \node (a2) at (2,2) {}; 
  \node (a3) at (1,3) {};  
  \node (a4) at (1,4) {};  
  \node (a5) at (2,5)  {};  
  \node (a6) at (3,3.5)  {};  

  \node (a7) at (0,2) {};  
  \node (a8) at (0,3.7) {};  
  \node (a9) at (-1,3)  {};  
  \node (a10) at (-1,4)  {}; 

  \node (a11) at (3.5,2) {};  
  \node (a12) at (4,3) {};  
  \node (a13) at (4,4)  {};  
  \node (a14) at (5.7,4)  {}; 

    \node (a15) at (-2,3) {};  
  \node (a16) at (-2.5,3) {};  
  \node (a17) at (-3,3)  {};  

   \node (a18) at (6.5,3) {};  
  \node (a19) at (7,3) {};  
  \node (a20) at (7.5,3)  {};

  \draw[draw, line width=2pt] (a1) -- (a2); 
  \draw[draw, line width=2pt] (a2) -- (a3);  
  \draw[draw, line width=2pt] (a3) -- (a4);  
  \draw[draw, line width=2pt] (a4) -- (a5);  
  \draw[draw, line width=2pt] (a5) -- (a6);  
  \draw[draw, line width=2pt] (a6) -- (a2);  

  \draw[draw, line width=2pt] (a1) -- (a7); 
  \draw[draw, line width=2pt] (a7) -- (a8);  
  \draw[draw, line width=2pt] (a8) -- (a5);  
  \draw[draw, line width=2pt] (a5) -- (a10);  
  \draw[draw, line width=2pt] (a10) -- (a9);  
  \draw[draw, line width=2pt] (a9) -- (a7); 

  \draw[draw, line width=2pt] (a1) -- (a11); 
  \draw[draw, line width=2pt] (a11) -- (a12);  
  \draw[draw, line width=2pt] (a12) -- (a13);  
  \draw[draw, line width=2pt] (a13) -- (a5);  
  \draw[draw, line width=2pt] (a5) -- (a14);  
  \draw[draw, line width=2pt] (a14) -- (a11);

\end{tikzpicture} 
         \caption{}
         \label{fig:exampleposet}
     \end{figure}

     \begin{figure}
         \centering
                 \begin{tikzpicture}  
  [scale=.9,auto=center,every node/.style={circle,fill=black}] 
    
   \node (a1) at (2,1) {};  
  \node (a2) at (2,2) {}; 
  \node (a3) at (1,3) {};  
  \node (a4) at (1,4) {};  
  \node (a5) at (2,5)  {};  
  \node (a6) at (3,3.5)  {};

  \draw[draw, line width=2pt] (a1) -- (a2); 
  \draw[draw, line width=2pt] (a2) -- (a3);  
  \draw[draw, line width=2pt] (a3) -- (a4);  
  \draw[draw, line width=2pt] (a4) -- (a5);  
  \draw[draw, line width=2pt] (a5) -- (a6);  
  \draw[draw, line width=2pt] (a6) -- (a2);

\end{tikzpicture} 
         \caption{}
         \label{fig:examplefiniteposet}
     \end{figure}

\section{Preliminaries}\label{Nsubrings}

We first establish some terminology. In this paper, all rings are assumed to be commutative with unity.  
When $R$ is a Noetherian ring with exactly one maximal ideal, we say $R$ is \textit{local}. If $R$ has exactly one maximal ideal but is not necessarily Noetherian, we call it \textit{quasi-local}. We use the notation $(R, M)$ when $R$ is a quasi-local ring and $M$ is the maximal ideal of $R$.  If $(R,M)$ is a local ring, we use $\widehat{R}$ to denote the $M$-adic completion of $R$.
We use the standard notation Spec$R$ to denote the set of prime ideals of the ring $R$. If $R$ is a local ring, we informally refer to two prime ideals $P, Q \in \Spec \wh R$ as being ``glued together" in $R$ if $P \cap R = Q \cap R$. 

Before we begin, we recall what it means for a ring to be catenary.

\begin{defi}
    A ring $R$ is called \textit{catenary} if for all $P, Q \in \Spec R$ with $P \subseteq Q$, every saturated chain of prime ideals from $P$ to $Q$ has the same length. 
\end{defi}

If a ring is not catenary, we say that it is noncatenary. The following prime avoidance lemma will be used multiple times in our construction.

\begin{lem}[\cite{heitmann}, Lemma 2] \label{countableprimeavoidance}
    Let $T$ be a complete local ring with maximal ideal $M$, let $C$ be a countable set of prime ideals in $\Spec T$ such that $M \notin C$, and let $D$ be a countable set of elements of $T$. If $I$ is an ideal of $T$ which is contained in no single $P$ in $C$, then $I \not \subset \bigcup \{(P + r) : P \in C, r \in D\}$.
\end{lem}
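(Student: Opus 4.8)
The plan is to produce, by successive approximation, a single element $x \in I$ lying in none of the cosets $P+r$. Since $C$ and $D$ are countable, the family $\{P + r : P \in C,\ r \in D\}$ is countable; if it is empty the claim is vacuous, so enumerate it as $S_1, S_2, \dots$ and write $S_n = P_n + r_n$ with $P_n \in C$ and $r_n \in D$. I will use two features of $T$: it is $M$-adically complete, and, being Noetherian, every ideal $J \subsetneq T$ is $M$-adically closed, since applying Krull's intersection theorem to $T/J$ gives $\bigcap_n (M^n + J) = J$. In particular $I$ and every $P_n$ are closed.

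First I would build a Cauchy sequence $x_0 = 0, x_1, x_2, \dots$ in $I$ together with integers $m_1, m_2, \dots \geq 1$, maintaining the invariant: $x_n \in I$; $x_n - x_{n-1} \in I \cap M^n$ for $n \geq 1$; and $x_n - r_k \notin P_k + M^{m_k}$ for all $k \leq n$ (so in particular $x_n \notin S_k$). The role of the ``buffer'' $m_k$ is stability: as soon as $x_k - r_k \notin P_k$, closedness of $P_k$ yields an $m_k$ with $x_k - r_k \notin P_k + M^{m_k}$, and then perturbing $x_k$ by anything in $M^{m_k}$ cannot push $x_k - r_k$ into $P_k$. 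At step $n$, let $L_n = \max\{n, m_1, \dots, m_{n-1}\}$. If $x_{n-1} \notin S_n$, set $x_n = x_{n-1}$. Otherwise $x_{n-1} - r_n \in P_n$, and I would choose $z_n \in (I \cap M^{L_n}) \setminus P_n$ and set $x_n = x_{n-1} + z_n$, so that $x_n - r_n \notin P_n$ because $P_n$ is an additive subgroup. Such a $z_n$ exists because $I \cap M^{L_n} \not\subseteq P_n$: otherwise $I M^{L_n} \subseteq I \cap M^{L_n} \subseteq P_n$, and since $P_n$ is prime either $I \subseteq P_n$, contradicting that $I$ lies in no member of $C$, or $M^{L_n} \subseteq P_n$, forcing $M = P_n$ and contradicting $M \notin C$. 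In either case $x_n - r_n \notin P_n$, so closedness of $P_n$ supplies $m_n \geq 1$ with $x_n - r_n \notin P_n + M^{m_n}$; and since $x_n - x_{n-1} \in M^{L_n} \subseteq M^{m_k}$ for every $k < n$, the earlier buffered conditions survive, so the invariant holds at stage $n$.

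To finish: $(x_n)$ is Cauchy since $x_{n+1} - x_n \in M^{n+1}$, hence converges to some $x \in T$ by completeness, and $x \in I$ since $I$ is closed. Fix $k$. For each $n > k$ the correction $x_n - x_{n-1}$ lies in $M^{L_n} \subseteq M^{m_k}$, so $x - x_k \in M^{m_k}$ by closedness of $M^{m_k}$. Writing $x - r_k = (x_k - r_k) + (x - x_k)$ and using $x_k - r_k \notin P_k + M^{m_k}$, we conclude $x - r_k \notin P_k$, i.e., $x \notin S_k$. As $k$ was arbitrary, $x \in I \setminus \bigcup\{P+r : P \in C,\ r \in D\}$, which is exactly what is asserted.

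The step I expect to be the main obstacle is the one the buffers are built to handle: a condition of the form $x_k - r_k \notin P_k$ is not preserved under arbitrarily small $M$-adic perturbations (prime ideals are not open), so a careless approximation argument can have its limit fall back into some coset $P_k + r_k$. Recording, via closedness of $P_k$, a finite ``$M$-adic margin'' $m_k$ and then refusing all later corrections coarser than $M^{m_k}$ repairs this. Equivalently, one can phrase the whole argument as the Baire category theorem applied to the complete metric space $I$: each $S_n \cap I$ is closed with empty interior, the latter by the same observation that $I \cap M^m \not\subseteq P_n$.
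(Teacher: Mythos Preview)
Your argument is correct. The paper does not give its own proof of this lemma; it simply quotes it from Heitmann's paper \cite{heitmann} (Lemma~2) and uses it as a black box. Your successive--approximation construction with the ``buffer'' exponents $m_k$ is exactly Heitmann's original method, and your justification of the key point $I\cap M^{L_n}\not\subseteq P_n$ (via $IM^{L_n}\subseteq P_n$ forcing $I\subseteq P_n$ or $M\subseteq P_n$) and of stability under later corrections is complete. The Baire--category reformulation you mention at the end is also standard and equivalent.
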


To construct our noncatenary UFDs, we use ideas and results from \cite{heitmann} which rely heavily on a specific type of subring of a complete local ring, called an N-subring.

\begin{defi}[\cite{heitmann}] 
    Let $(T, M)$ be a complete local ring. We say a quasi-local subring $(R, M \cap R)$ of $T$ is an \textit{N-subring} of $T$ if $R$ is a UFD and 

    \begin{enumerate}
        \item $|R| \leq \text{max}\{\aleph_0, |T/M| \}$, with equality only when $T/M$ is countable,
        \item $Q \cap R = (0)$ for all $Q \in \text{Ass} T$, and
        \item If $t \in T$ is regular and $P \in \text{Ass} (T/tT)$, then $\text{ht} (P \cap R) \leq 1$.
    \end{enumerate}
\end{defi}

The remainder of this section will be devoted to proving results about N-subrings that will be crucial for our constructions.

The following lemma establishes machinery to adjoin an element of $T$ to an N-subring of $T$ and ensure that the result is still an N-subring of $T$. Note that, if $R$ is a subring of a ring $T$ and $P$ is a prime ideal of $T$, then there is an injective map from $R/(R \cap P)$ to $T/P$. Thus, it make sense to say that an element $x + P \in T/P$ is either algebraic or transcendental over the ring $R/(R \cap P).$ 

\begin{lem} [\cite{loepp}, Lemma 11] \label{loepp}
 Let $(T, M)$ be a complete local ring and let $\mathfrak{p} \in \textnormal{Spec} T$. Let $R$ be an N-subring of $T$ with $\mathfrak{p} \cap R = (0)$. Suppose $C \subset \textnormal{Spec} T$ satisfies the following conditions: 

\begin{enumerate}
    \item $M \notin C$,
    \item $\mathfrak{p} \in C$,
    \item $\{P \in \textnormal{Spec} T \ | \ P \in \textnormal{Ass} (T/rT) \textnormal{ with } 0 \neq r \in R \} \subset C$, and
    \item $\textnormal{Ass} (T) \subset C$.
\end{enumerate}
Let $x \in T$ be such that $x \notin P$ and $x + P$ is transcendental over $R/(R \cap P)$ as an element of $T/P$ for every $P \in C$. Then, $S = R[x]_{(R[x] \cap M)}$ is an N-subring of $T$ properly containing $R$, $|S| = \textnormal{max}\{\aleph_0, |R|\}$, and $S \cap \mathfrak{p} = (0)$.
\end{lem}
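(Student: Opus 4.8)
The plan is to verify, one axiom at a time, that $S = R[x]_{(R[x]\cap M)}$ is an N-subring of $T$, and along the way record the auxiliary facts $R\subsetneq S$, $|S| = \max\{\aleph_0,|R|\}$, and $S\cap\mathfrak{p} = (0)$. First the bookkeeping. Applying the transcendence hypothesis to $\mathfrak{p}\in C$, together with $\mathfrak{p}\cap R = (0)$, shows that $x$ is transcendental over $R$; hence $R[x]$ is a polynomial ring over a UFD, so it is a UFD, and $x\notin R$, giving $R\subsetneq S$. Since $R[x]\cap M$ is a prime of $R[x]$ and every element of $R[x]$ outside $M$ is a unit of the local ring $T$, the localization $S$ is a quasi-local UFD with maximal ideal $M\cap S = (R[x]\cap M)S$, and $S\subseteq T$. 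Finally $|S| = |R[x]| = \max\{\aleph_0,|R|\}$, and this together with axiom (1) for $R$ gives axiom (1) for $S$: if $T/M$ is uncountable then axiom (1) for $R$ forces $|R| < |T/M|$, hence $|S| = \max\{\aleph_0,|R|\} < |T/M|$ as well.

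The engine for the rest is the observation that if $P\in\Spec T$ and $x + P$ is transcendental over $R/(R\cap P)$, then reducing any polynomial expression with coefficients in $R$ modulo $P$ forces $P\cap R[x] = (R\cap P)R[x]$. Since extending a prime of $R$ to $R[x]$ preserves height, and localizing $R[x]$ at $R[x]\cap M$ preserves the height of the (contained) prime $P\cap R[x]$, this yields $\operatorname{ht}_S(P\cap S) = \operatorname{ht}_R(R\cap P)$, and in particular $P\cap S = (0)$ when $R\cap P = (0)$. Taking $P = \mathfrak{p}$ gives $S\cap\mathfrak{p} = (0)$; taking $P = Q$ for $Q\in\Ass T$ — which lies in $C$ by condition (4) and satisfies $R\cap Q = (0)$ because $R$ is an N-subring — gives axiom (2) for $S$.

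Axiom (3) is the crux, and the step I expect to be the main obstacle. Fix a regular $t\in T$ and $P\in\Ass(T/tT)$; I must show $\operatorname{ht}_S(P\cap S)\le 1$. Since $R$ is an N-subring, $\operatorname{ht}_R(R\cap P)\le 1$. If $P\in C$ the previous paragraph gives $\operatorname{ht}_S(P\cap S) = \operatorname{ht}_R(R\cap P)\le1$. Assume $P\notin C$. If $R\cap P = (0)$, every prime of $R[x]$ contained in $P\cap R[x]$ contracts to $(0)$ in $R$, hence survives in the PID $\operatorname{Frac}(R)[X]$, so $\operatorname{ht}_{R[x]}(P\cap R[x])\le1$ and $\operatorname{ht}_S(P\cap S)\le1$. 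If $R\cap P\ne(0)$, write $R\cap P = (\pi)$ with $\pi$ a prime element of $R$; I claim $P\cap R[x] = \pi R[x]$, which again forces $\operatorname{ht}_S(P\cap S) = 1$. The claim is proved by a depth count: on one hand $t$ is a regular element of $T_P$ lying in $PT_P\in\Ass(T_P/tT_P)$, so $\operatorname{depth}T_P = 1$. On the other hand $\pi$ is regular in $T$ (it avoids every associated prime of $T$, by axiom (2) for $R$), so $\Ass(T/\pi T)\subseteq C$ by condition (3) on $C$; moreover each $Q'\in\Ass(T/\pi T)$ contains $\pi$ and satisfies $\operatorname{ht}_R(R\cap Q')\le1$ by axiom (3) for $R$, so $R\cap Q' = (\pi)$ and $x + Q'$ is transcendental over $R/(\pi)$. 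Now if some $g\in R[x]$ were in $P$ but not in $\pi R[x]$, its image in $(R/(\pi))[X]$ would be nonzero, so $g(x)\notin Q'$ for every $Q'\in\Ass(T/\pi T)$, making $g(x)$ a nonzerodivisor on $T/\pi T$; then $\pi, g(x)$ is a regular sequence inside $P$, so $\operatorname{depth}T_P\ge2$, a contradiction. Hence $P\cap R[x] = \pi R[x]$, completing axiom (3).

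In summary, apart from the transcendence-to-contraction lemma and the depth argument of the last paragraph — which I regard as the real content — the proof is bookkeeping, but one point deserves care because $R$ is not assumed Noetherian: the facts that extension of a prime ideal from $R$ to $R[X]$ preserves height and that localizing at $R[x]\cap M$ preserves the height of a contained prime should be invoked directly rather than deduced from the dimension formula $\dim R[X] = \dim R + 1$.
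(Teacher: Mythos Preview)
The paper does not prove this lemma; it is quoted verbatim as Lemma~11 of \cite{loepp} and used as a black box throughout Section~\ref{Nsubrings}. So there is no in-paper proof to compare against.

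That said, your argument is sound and is essentially the standard one. The key steps --- that transcendence of $x+P$ over $R/(R\cap P)$ forces $P\cap R[x] = (R\cap P)R[x]$, and the depth-count contradiction in the case $P\notin C$ with $R\cap P = (\pi)$ --- are exactly the ideas behind the original proof in \cite{loepp} (and, before that, Heitmann's treatment in \cite{heitmann}). Your caution at the end about not invoking $\dim R[X] = \dim R + 1$ is well placed, since $R$ is only quasi-local; fortunately the only heights in play are $0$ and $1$, and in a UFD a nonzero principal prime has height exactly one, so the height-preservation claims you need are elementary.
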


In our construction, we ensure that our final UFD contains generating sets of carefully chosen prime ideals of $T$. 
 We use the following lemma to make sure that our UFD contains a generating set for one prime ideal of $T$ satisfying very specific properties. 

\begin{lem}\label{onegeneratingset}
     Let $(T, M)$ be a complete local ring with depth$T \geq 2$, let $R$ be a countable N-subring of $T$, and let $\mathfrak{p}$ be a nonmaximal prime ideal of $T$ such that $R \cap \mathfrak{p} = (0).$ Let $Q$ be a prime ideal of $T$ such that $Q \not\subseteq \mathfrak{p}$ and, if $P \in \Spec T$ such that $P \in \mbox{Ass}(T)$ or $P \in \mbox{Ass}(T/zT)$ for some nonzero regular element $z \in T$, then $Q \not\subseteq P$.
     Then there exists a countable N-subring $S$ of $T$ such that $R \subseteq S$, prime elements in $R$ are prime in $S$, $S$ contains a generating set for $Q$, and $S \cap \mathfrak{p} = (0)$. 
\end{lem}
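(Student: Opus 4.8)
The goal is to start with a countable N-subring $R$ and adjoin finitely (countably) many elements so that the enlarged ring $S$ contains a full generating set $\{q_1,\dots,q_n\}$ for $Q$, while preserving the N-subring property, the equality $S\cap\mathfrak p=(0)$, and the primality of prime elements of $R$. The natural strategy is to adjoin the generators one at a time, at each stage applying Lemma~\ref{loepp}. The obstacle is that Lemma~\ref{loepp} requires the adjoined element to be transcendental over the current subring modulo every prime $P$ in a suitable countable set $C$; a fixed generator $q_i$ of $Q$ need not have this property. The fix, used in \cite{heitmann} and \cite{loepp}, is to adjoin not $q_i$ itself but a \emph{modified} generator $q_i' = q_i + x_i$ where $x_i$ is a carefully chosen element of the ideal generated by the previously adjoined (modified) generators together with $q_1,\dots,q_{i-1}$ — or more precisely, $x_i$ lies in an appropriate ideal so that $(q_1',\dots,q_i') = (q_1,\dots,q_i)$ at the end. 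Then $\{q_1',\dots,q_n'\}$ still generates $Q$, and the $q_i'$ can be chosen generic enough.

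\smallskip

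**The key steps.** First I would fix a generating set $q_1,\dots,q_n$ for $Q$. For the inductive step, suppose $R_{i-1}$ is a countable N-subring with $R\subseteq R_{i-1}$, with $R_{i-1}\cap\mathfrak p=(0)$, prime elements of $R$ still prime in $R_{i-1}$, and such that $R_{i-1}$ contains elements $q_1',\dots,q_{i-1}'$ with $(q_1',\dots,q_{i-1}') = (q_1,\dots,q_{i-1})$ in $T$. Let $C$ be the countable set of primes consisting of $\mathfrak p$, all $P\in\Ass T$, all $P\in\Ass(T/rT)$ for $0\neq r\in R_{i-1}$, together with (this is the new ingredient) all minimal primes of $Q$ — or rather, I need $C$ to be closed so that Lemma~\ref{loepp} applies. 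The crucial point is that for each $P\in C$ we must produce $x_i\in (q_1,\dots,q_{i-1}) + \dots$ (an ideal contained in $Q$) such that $q_i + x_i \notin P$ and $q_i + x_i + P$ is transcendental over $R_{i-1}/(R_{i-1}\cap P)$ in $T/P$. Using the hypothesis that $Q\not\subseteq P$ for every $P\in C$ (this is exactly where $Q\not\subseteq\mathfrak p$ and the $\Ass$-avoidance hypotheses are used — noting $\mathfrak p$, the $\Ass T$ primes, and the $\Ass(T/zT)$ primes all lie in $C$), the ideal $(q_1,\dots,q_{i-1}) + Q$-part is not contained in any single $P\in C$; then Lemma~\ref{countableprimeavoidance} (countable prime avoidance, with $C$ the prime set and the countably many ``algebraic'' translates as the set $D$) produces the required $x_i$. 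Set $q_i' = q_i + x_i$; since $x_i$ lies in the ideal generated by $q_1,\dots,q_{i-1}$ (hence by $q_1',\dots,q_{i-1}'$), we get $(q_1',\dots,q_i') = (q_1,\dots,q_i)$. Then apply Lemma~\ref{loepp} with $x = q_i'$ to obtain $R_i = R_{i-1}[q_i']_{(R_{i-1}[q_i']\cap M)}$, a countable N-subring with $R_i\cap\mathfrak p=(0)$; primality of prime elements of $R$ is preserved because $R_{i-1}\subseteq R_i$ is an inclusion of UFDs induced by a polynomial-ring localization, under which a prime of the subring that remains nonunit stays prime (this is a standard fact about the N-subring construction, or follows since $R_i$ is a localization of $R_{i-1}[q_i']$ and $q_i'$ is transcendental). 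After $n$ steps, set $S = R_n$; it contains the generating set $\{q_1',\dots,q_n'\}$ of $Q$.

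\smallskip

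**The main obstacle.** The delicate part is verifying that the ideal from which $x_i$ is drawn is genuinely not contained in any $P\in C$, so that countable prime avoidance applies, \emph{and simultaneously} that $x_i$ can be taken inside $(q_1,\dots,q_{i-1})$ so that the generating-set equality is maintained. When $i=1$ there are no previous generators, so $x_1$ must be drawn from $Q$ itself (or a power, to stay inside the ideal we want) — here one uses $Q\not\subseteq P$ directly together with depth$\,T\geq 2$ to have enough room; for $i\geq 2$ one works with $(q_1,\dots,q_{i-1})$, which may fail to avoid some $P\in C$ only if $P\supseteq(q_1,\dots,q_{i-1})$, and in that case one instead perturbs within a larger ideal still contained in $Q$ and still missing $P$ (using $Q\not\subseteq P$). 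Managing this bookkeeping — choosing at each stage the right ideal $J_i$ with $(q_1,\dots,q_{i-1})\subseteq J_i\subseteq Q$, $J_i\not\subseteq P$ for all $P\in C$, and then $x_i\in J_i$ with $q_i+x_i$ avoiding $P$ and transcendental mod $P$ — is the technical heart of the proof, but each piece is a routine application of Lemmas~\ref{countableprimeavoidance} and~\ref{loepp}. I would also double-check the countability bookkeeping: $R$ countable forces each $R_i$ countable by Lemma~\ref{loepp}, and $n$ is finite, so $S$ is countable.
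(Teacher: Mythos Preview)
Your overall strategy is the same as the paper's: fix generators of $Q$, modify them one at a time so that each modified generator is transcendental modulo every prime in an appropriate countable set $C$, apply Lemma~\ref{loepp} at each step, and iterate. You also correctly identify the set $C$ (though the aside about ``minimal primes of $Q$'' is superfluous and confused---$Q$ is itself prime, and the paper's $C$ is simply $\Ass T \cup \{P\in\Ass(T/rT): 0\neq r\in R\}\cup\{\mathfrak p\}$).

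The gap is in your mechanism for ensuring the modified elements still generate $Q$. You propose taking the perturbation $x_i$ from $(q_1,\dots,q_{i-1})$ so that $(q_1',\dots,q_i')=(q_1,\dots,q_i)$, but immediately note this fails at $i=1$; your fallback, drawing $x_1$ from $Q$ itself or from some unspecified $J_i$ with $(q_1,\dots,q_{i-1})\subseteq J_i\subseteq Q$, breaks the generating-set equality---once $(q_1')\neq(q_1)$, the inductive claim ``$x_i\in(q_1,\dots,q_{i-1})$ hence $x_i\in(q_1',\dots,q_{i-1}')$'' no longer follows. As written, there is no argument that $(q_1',\dots,q_n')=Q$.

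The paper resolves this cleanly with Nakayama's Lemma: write $Q=(a_1,\dots,a_m)$, and at each step perturb $a_i$ by an element of $MQ$. Concretely, one first uses Lemma~\ref{countableprimeavoidance} (with $I=Q$, $D=\{0\}$) to find $q\in Q$ avoiding every $P\in C$; then, since $a_i+qt+P$ varies injectively with $t+P$, a second application of Lemma~\ref{countableprimeavoidance} (with $I=M$ and $D$ the countable set of algebraic coset representatives) produces $m\in M$ with $\tilde a_i=a_i+qm$ transcendental over $R_{i-1}/(R_{i-1}\cap P)$ for every $P\in C$. Because the perturbation $qm$ lies in $MQ$, one has $(\tilde a_1,\dots,\tilde a_i,a_{i+1},\dots,a_m)+MQ=Q$, so Nakayama gives $(\tilde a_1,\dots,\tilde a_i,a_{i+1},\dots,a_m)=Q$ at every stage. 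This single device---perturb inside $MQ$, then invoke Nakayama---replaces all of your $J_i$ bookkeeping and makes the base case uniform with the inductive step.
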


\begin{proof}
Let $Q = (a_1,a_2, \ldots ,a_m)$ for $a_i \in T$. Define $$C = \mbox{Ass}T \cup \{ P \in \Spec T \, | \, P \in \mbox{Ass}(T/rT) \mbox{ for some } 0 \neq r \in R \} \cup \{\mathfrak{p}\}.$$ Note that, since depth$T\geq 2$, $M \not\in C$ and, since $R$ is countable, so is $C$. In addition, our hypotheses imply that $Q \not\subseteq P$ for all $P \in C$. Use Lemma \ref{countableprimeavoidance} with $D = \{0\}$ to find $q_1 \in Q$ with $q_1 \not\in P$ for every $P \in C$.

If $P \in C$ and $a_1 + tq_1 + P = a_1 + t'q_1 + P$ for $t,t' \in T$ then $q_1(t - t') \in P$.  Since $q_1 \not\in P$, we have $t - t' \in P$ and so $t + P = t' + P$.  It follows that if $t + P \neq t' + P$ then $a_1 + tq_1 + P \neq a_1 + t'q_1 + P$. Now let $D_{(P)}$ be a full set of coset representatives for the cosets $t + P \in T/P$ that make $(a_1 + q_1t) + P$ algebraic over $R/(R \cap P)$. Since the algebraic closure of $R/(R \cap P)$ in $T/P$ is countable, we have that $D_{(P)}$ is countable. Let $D = \bigcup_{P \in C}D_{(P)}$, and note that $D$ is countable. Use Lemma \ref{countableprimeavoidance} to find $m_1 \in M$ such that $m_1 \not\in \bigcup \{(P + r) \, | \, P \in C, r \in D\}$. It follows that $(a_1 + q_1m_1) + P$ is transcendental over $R/(R \cap P)$ for all $P \in C$. By Lemma \ref{loepp}, if $\tilde{a}_1 = a_1 + q_1m_1$ then $R_1 = R[\tilde{a}_1]_{(R[\tilde{a}_1] \cap M)}$ is a countable N-subring of $T$ with $R_1 \cap \mathfrak{p} = (0)$. Let $P \in \mbox{Ass}T$. Then $\tilde{a}_1 + P$ is transcendental over $R/(R \cap P)$ and $R \cap P = (0)$.  It follows that $\tilde{a}_1$ is transcendental over $R$ and so prime elements in $R$ are prime in $R_1$. Note that $(\tilde{a}_1, a_2, \ldots ,a_m) + MQ = Q$ and so by Nakayama's Lemma, $Q = (a_1, a_2, \ldots ,a_m) = (\tilde{a}_1, a_2, \ldots ,a_m).$ 

Repeat the above process with $R$ replaced by $R_1$ to find $q_2 \in Q$ and $m_2 \in M$ so that, if $\tilde{a}_2 = a_2 + q_2m_2$ then $R_2 = R_1[\tilde{a}_2]_{(R_1[\tilde{a}_2] \cap M)}$ is a countable N-subring of $T$, $R_2 \cap \mathfrak{p} = (0)$, prime elements in $R_1$ are prime in $R_2$, and $Q = (a_1, a_2, \ldots ,a_m) =  (\tilde{a}_1, a_2, \ldots ,a_m) = (\tilde{a}_1, \tilde{a}_2, \ldots ,a_m).$

Continue the process to find a countable N-subring $R_m$ of $T$ such that $R \subseteq R_m$, $R_m \cap \mathfrak{p} = (0)$, prime elements in $R$ are prime in $R_m$ and $R_m$ contains a generating set for $Q$. Then $S = R_m$ is the desired N-subring of $T$.
\end{proof}

The following definition, taken from \cite{heitmann}, describes a particular type of extension of an N-subring of $T$ that will be useful for our construction.

\begin{defi} [\cite{heitmann}]
    Let $(T, M)$ be a complete local ring and let $R \subseteq S$ be N-subrings of $T$. We say that $S$ is an \textit{A-extension} of $R$ if prime elements in $R$ are prime in  $S$ and $|S| \leq \text{max}\{|R|, \aleph_0\}$.
\end{defi}

The next lemma show that, in certain circumstances, the union of an increasing chain of N-subrings of $T$ is again an N-subring of $T$. We use this result multiple times in our construction.

\begin{lem} [\cite{heitmann}, Lemma 6] \label{union}
    Let $(T, M)$ be a complete local ring and let $R_0$ be an N-subring of $T$. Let $\Omega$ be a well-ordered set with least element $0$ and assume either $\Omega$ is countable or for all $\alpha \in \Omega$, $|\{\beta \in \Omega \ | \ \beta < \alpha \} | < |T/M|$. Let $\gamma(\alpha) = \text{sup}\{\beta \in \Omega \ | \ \beta < \alpha \}$. Suppose $\{R_{\alpha} \ | \ \alpha \in \Omega \}$ is an ascending collection of rings such that if $\gamma(\alpha) = \alpha$, then $R_{\alpha} = \bigcup_{\beta < \alpha} R_{\beta}$ while if $\gamma(\alpha) < \alpha$, $R_{\alpha}$ is an A-extension of $R_{\gamma (\alpha)}$. 
    Then $S = \bigcup R_{\alpha}$ satisfies all conditions to be an N-subring of $T$ except the cardinality condition. $|S| \leq \text{max}\{\aleph_0, |R_0|, |\Omega|\}$. Further, elements which are prime in some $R_{\alpha}$ remain prime in $S$. 
\end{lem}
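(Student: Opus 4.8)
My plan is to run a single transfinite induction on $\alpha \in \Omega$ that simultaneously verifies, for each $R_\alpha$, all of the asserted conclusions, and then to obtain the statement for $S = \bigcup_{\alpha \in \Omega} R_\alpha$ by repeating the limit-stage part of that induction with the whole chain. The inductive hypothesis at $\alpha$ will be: $R_\alpha$ is quasi-local with maximal ideal $M \cap R_\alpha$; $R_\alpha$ is a UFD; $R_\alpha$ satisfies conditions (2) and (3) of the definition of an N-subring; $|R_\alpha| \le \max\{\aleph_0, |R_0|, |\{\beta \in \Omega : \beta \le \alpha\}|\}$; and every element that is prime in some $R_\beta$ with $\beta \le \alpha$ is prime in $R_\alpha$. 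The base case $\alpha = 0$ holds because $R_0$ is an N-subring. When $\gamma(\alpha) < \alpha$, the ring $R_\alpha$ is by hypothesis an A-extension of $R_{\gamma(\alpha)}$, hence is itself an N-subring, so the UFD, quasi-local, and (2) and (3) parts are automatic; the cardinality bound follows from $|R_\alpha| \le \max\{|R_{\gamma(\alpha)}|, \aleph_0\}$ together with the inductive bound on $|R_{\gamma(\alpha)}|$; and if $p$ is prime in $R_\beta$ with $\beta \le \gamma(\alpha)$, then $p$ is prime in $R_{\gamma(\alpha)}$ by the inductive hypothesis and hence prime in $R_\alpha$ by the defining property of an A-extension.

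The work is in the case $\gamma(\alpha) = \alpha$, i.e. $R_\alpha = \bigcup_{\beta < \alpha} R_\beta$, and the same argument applied to the full chain will handle $S$. Quasi-locality: an element of $R_\alpha$ not in $M$ lies in some $R_\beta$, where it is a unit, hence it is a unit in $R_\alpha$, so $M \cap R_\alpha$ is the unique maximal ideal. Prime-preservation: if $p$ is prime in $R_{\beta_0}$ for some $\beta_0 < \alpha$, then $p$ is a nonzero nonunit of $R_\alpha$, and whenever $p \mid ab$ in $R_\alpha$ the elements $a$, $b$, and the quotient all lie in a common $R_\beta$ with $\beta_0 \le \beta < \alpha$, where $p$ is prime by induction, so $p \mid a$ or $p \mid b$ there and hence in $R_\alpha$. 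Granting this, $R_\alpha$ is a UFD: it is a domain (a union of a chain of domains), and any nonzero nonunit of $R_\alpha$ lies in $M \cap R_\beta$ for some $\beta < \alpha$, where it factors as a unit times prime elements of $R_\beta$, which remain prime in $R_\alpha$. Condition (2) holds since $Q \cap R_\alpha = \bigcup_{\beta<\alpha}(Q \cap R_\beta) = (0)$ for $Q \in \text{Ass}\,T$. For condition (3), if $t \in T$ is regular, $P \in \text{Ass}(T/tT)$, and $\text{ht}(P \cap R_\alpha) \ge 2$, choose primes $\mathfrak p_0 \subsetneq \mathfrak p_1 \subsetneq \mathfrak p_2 \subseteq P \cap R_\alpha$ and elements $x \in \mathfrak p_1 \setminus \mathfrak p_0$, $y \in \mathfrak p_2 \setminus \mathfrak p_1$; taking $\beta < \alpha$ with $x, y \in R_\beta$, the chain $\mathfrak p_0 \cap R_\beta \subsetneq \mathfrak p_1 \cap R_\beta \subsetneq \mathfrak p_2 \cap R_\beta \subseteq P \cap R_\beta$ shows $\text{ht}(P \cap R_\beta) \ge 2$, contradicting the inductive hypothesis for $R_\beta$. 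Finally, the cardinality estimate at a limit stage, and for $S$, is $|R_\alpha| \le \sum_{\beta<\alpha}|R_\beta| \le |\{\beta<\alpha\}| \cdot \max\{\aleph_0, |R_0|, |\{\beta<\alpha\}|\}$, which reduces by cardinal arithmetic to the claimed bound; the hypothesis on $\Omega$ (either countable, or each element with fewer than $|T/M|$ predecessors) is what keeps the relevant index sets under control. Transferring all of this to the chain $\{R_\alpha : \alpha \in \Omega\}$ gives that $S$ is quasi-local with maximal ideal $M \cap S$, is a UFD satisfying (2) and (3), has $|S| \le \max\{\aleph_0, |R_0|, |\Omega|\}$, and has the property that an element prime in some $R_\alpha$ is prime in $S$.

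I expect the main obstacle to be the verification of condition (3) — and, to a lesser extent, the UFD property — at the union stages, because these are statements about all of $\text{Spec}$ that must be ``reflected'' into a single ring $R_\beta$ of the chain. What makes this manageable is that a witness to failure — a length-two chain of primes, or a nonzero nonunit with no prime factorization — involves only finitely many elements of $S$, and those elements already lie in some $R_\beta$, so everything reduces to the inductive hypothesis at that stage. I will also take care to phrase the induction so that it is genuinely not circular: all properties are established for $R_\alpha$ in one pass, and the passage to $S$ is not an instance of the lemma but a direct repetition of the limit-stage computation.
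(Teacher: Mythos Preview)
The paper does not prove this lemma; it is quoted from \cite{heitmann} (Lemma~6) and used as a black box, so there is no in-paper proof to compare against.

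Your argument is the standard one and is correct. The only stages requiring work are the limit stages (and the final passage to $S$), and your reflection technique---locating finitely many witnesses to a purported failure of the UFD property or of condition~(3) inside a single $R_\beta$ and invoking the inductive hypothesis there---is exactly how these are handled. The order in which you establish the properties at a limit stage (prime-preservation first, then UFD via factorization in some $R_\beta$ into elements that remain prime) correctly avoids circularity.

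One minor clarification on your closing remark: the cardinality bound $|S| \le \max\{\aleph_0, |R_0|, |\Omega|\}$ follows from cardinal arithmetic alone and does not itself use the hypothesis on $\Omega$. The role of that hypothesis is rather to guarantee that at each limit stage $\alpha$ the union $R_\alpha$ still satisfies the N-subring cardinality condition $|R_\alpha| < |T/M|$ (or $\le \aleph_0$ in the countable case), so that the statement ``$R_{\alpha'}$ is an A-extension of $R_{\gamma(\alpha')}$'' at the next successor stage is meaningful. This does not affect the correctness of your proof.
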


The following lemma shows that we can repeat Lemma \ref{onegeneratingset} infinitely many times. In other words, given a countable set of prime ideals of $T$ satisfying certain conditions, the lemma allows us to adjoin a generating set for each of these prime ideals to an N-subring of $T$, with the resulting ring being an N-subring of $T$.

\begin{lem} \label{generatingsets}
     Let $(T, M)$ be a complete local ring with depth$T \geq 2$, let $R$ be a countable N-subring of $T$, and let $\mathfrak{p}$ be a nonmaximal prime ideal of $T$ with $R \cap \mathfrak{p} = (0)$. Let $\{Q_j\}_{j \in \mathbb{N}}$ be a countable set of prime ideals of $T$ such that for every $j \in \mathbb{N}$, $Q_j \not\subseteq \mathfrak{p}$ and if $P \in \Spec T$ such that $P \in \mbox{Ass}(T)$ or $P \in \mbox{Ass}(T/zT)$ for some nonzero regular element $z \in T$, then $Q_j \not\subseteq P$.
     Then there exists a countable N-subring $S$ of $T$ such that $R \subseteq S$, $S \cap \mathfrak{p} = (0)$, prime elements in $R$ are prime in $S$, and, for every $j \in \mathbb{N}$, $S$ contains a generating set for $Q_j$. 
\end{lem}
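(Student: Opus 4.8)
The plan is to iterate Lemma~\ref{onegeneratingset} countably many times, building an ascending chain of countable N-subrings $R = S_0 \subseteq S_1 \subseteq S_2 \subseteq \cdots$, where at stage $j$ we use Lemma~\ref{onegeneratingset} to adjoin a generating set for $Q_j$, and then take $S = \bigcup_{j \in \mathbb{N}} S_j$. First I would set $S_0 = R$. Given $S_{j-1}$, a countable N-subring of $T$ with $S_{j-1} \cap \mathfrak{p} = (0)$ and with prime elements of $R$ still prime in $S_{j-1}$, I apply Lemma~\ref{onegeneratingset} with the N-subring $S_{j-1}$ in place of $R$ and the prime ideal $Q_j$ in place of $Q$: the hypotheses on $Q_j$ in the present lemma (namely $Q_j \not\subseteq \mathfrak{p}$ and $Q_j \not\subseteq P$ for every $P \in \mathrm{Ass}(T) \cup \bigcup_z \mathrm{Ass}(T/zT)$) are exactly what Lemma~\ref{onegeneratingset} requires, and they do not depend on which N-subring we feed in. This yields a countable N-subring $S_j$ of $T$ with $S_{j-1} \subseteq S_j$, with $S_j \cap \mathfrak{p} = (0)$, with prime elements of $S_{j-1}$ remaining prime in $S_j$, and with $S_j$ containing a generating set for $Q_j$.

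Next I would pass to the union. Take $\Omega = \mathbb{N}$ with its usual well-ordering; this is countable, so the cardinality hypothesis of Lemma~\ref{union} is satisfied. Here $\gamma(\alpha) = \alpha - 1$ for every $\alpha \geq 1$ and $\gamma(0) = 0$, so no limit stages occur and at each successor stage $S_j$ is an A-extension of $S_{j-1}$ (it is an N-subring containing $S_{j-1}$ with prime elements preserved and cardinality $\leq \aleph_0$). Lemma~\ref{union} then tells us that $S = \bigcup_{j} S_j$ satisfies all the conditions to be an N-subring of $T$ except possibly the cardinality condition, and that elements prime in some $S_j$ remain prime in $S$; in particular prime elements of $R = S_0$ remain prime in $S$. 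Since $S$ is a countable union of countable rings, $|S| \leq \aleph_0$, so the cardinality condition holds as well and $S$ is genuinely a countable N-subring of $T$.

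Finally I would verify the remaining claims about $S$. Since $\mathfrak{p} \cap S_j = (0)$ for every $j$ and $S = \bigcup_j S_j$, we get $\mathfrak{p} \cap S = (0)$. For each fixed $j$, the ring $S_j$ already contains a generating set for $Q_j$, and $S_j \subseteq S$, so $S$ contains a generating set for $Q_j$; as $j$ ranges over $\mathbb{N}$ this gives generating sets for every $Q_j$. This completes the construction. The argument is essentially bookkeeping once Lemma~\ref{onegeneratingset} and Lemma~\ref{union} are in hand; the only point that needs a moment's care is confirming that the hypotheses on the $Q_j$ transfer verbatim to each application of Lemma~\ref{onegeneratingset} regardless of the current N-subring, and that the successor-only chain genuinely fits the A-extension framework of Lemma~\ref{union} so that no limit-stage union needs separate treatment.
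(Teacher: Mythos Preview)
Your proposal is correct and follows essentially the same approach as the paper: iterate Lemma~\ref{onegeneratingset} to build an ascending chain of countable N-subrings, each adjoining a generating set for the next $Q_j$, then take the union and invoke Lemma~\ref{union}. The paper's proof is structurally identical (with $R_k$ in place of your $S_j$), and your added remarks about why the hypotheses on $Q_j$ are independent of the current N-subring and why each step is an A-extension are accurate.
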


\begin{proof}
    Let $R_0 = R$. By Lemma \ref{onegeneratingset}, there exists a countable N-subring $R_1$ of $T$ such that $R_0 \subseteq R_1$, $R_1 \cap \mathfrak{p} = (0)$, $R_1$ contains a generating set for $Q_1$, and $R_1$ is an A-extension of $R_0$. 
    We inductively define $R_k$ for every $k > 1$. Assume that $k > 1$ and  $R_{k- 1}$ has been defined so that for $\ell \leq k - 1$, $R_{\ell}$ is a countable N-subring of $T$ containing generating sets for $Q_1, Q_2, \ldots , Q_{\ell}$, $R_{\ell} \cap \mathfrak{p} = (0)$, and $R_{\ell}$ is an A-extension of $R_{\ell - 1}$. 
    Define $R_k$ to be the countable N-subring obtained from Lemma \ref{onegeneratingset} so that $R_k$ is an A-extension of $R_{k - 1}$, $R_k$ contains a generating set for $Q_k$, and $R_k \cap \mathfrak{p} = (0)$.  
    
    Let $S = \bigcup_{k \in \N}R_k$. We claim that this is the desired N-subring of $T$. For all $k \in \N$, $R_k$ is an A-extension of $R_{k-1}$, so by Lemma \ref{union} $S$ is a countable N-subring of $T$ such that prime elements in $R$ are prime in $S$. Furthermore, a generating set for $Q_k$ is contained in $R_k$, so, for every $j \in \mathbb{N}$, $S$ contains a generating set for $Q_j$. Finally, since $R_j \cap \mathfrak{p} = (0)$ for every $j \in \mathbb{N}$, we have $S \cap \mathfrak{p} = (0)$.
\end{proof}


We use the next result to identify height one prime ideals of $T$ that will be glued together in our final UFD.

\begin{lem}\label{findingheightones}
Let $(T,M)$ be a complete local ring and let $R$ be a countable N-subring of $T$.  Let $Q$ be a prime ideal of $T$ such that if $P \in \mbox{Ass}(T)$ or $P \in \mbox{Ass}(T/rT)$ for some $0 \neq r \in R$, then $Q \not\subseteq P$. 
Let $X = \{Q_1, Q_2,\ldots ,Q_n\}$ be a (possibly empty) set of prime ideals of $T$ such that $Q \not\subseteq Q_j$ for all $j = 1,2, \ldots ,n$. Then there exists a height one prime ideal $P'$ of $T$ such that $P' \subseteq Q$ and, if $P \in \mbox{Ass}(T)$ or $P \in \mbox{Ass}(T/rT)$ for some $0 \neq r \in R$, then $P' \not\subseteq P$. Moreover, if $X$ is not empty then $P' \not\subseteq Q_j$ for all $j = 1,2, \ldots ,n$.
\end{lem}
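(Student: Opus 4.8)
The plan is to produce a single regular element $x\in Q$ that avoids all of the (countably many) ``forbidden'' primes, and then take $P'$ to be a minimal prime of $xT$ lying inside $Q$; the avoidance then passes automatically from $x$ to $P'$.

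First I would assemble the forbidden set
\[
C=\mbox{Ass}(T)\;\cup\;\{P\in\Spec T : P\in\mbox{Ass}(T/rT)\text{ for some }0\neq r\in R\}\;\cup\;X.
\]
Since $T$ is Noetherian, $\mbox{Ass}(T)$ and each $\mbox{Ass}(T/rT)$ are finite; since $R$ is countable and $X$ is finite, $C$ is countable. The hypotheses on $Q$ say precisely that $Q\not\subseteq P$ for every $P\in C$, and in particular (taking any such $P$) they force $M\notin C$, since $Q\subseteq M$ always. Hence Lemma \ref{countableprimeavoidance} applies with $I=Q$ and $D=\{0\}$, producing $x\in Q$ with $x\notin P$ for all $P\in C$.

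Next I would note that $x$ avoids every minimal prime of $T$ (the minimal primes of $T$ lie in $\mbox{Ass}(T)\subseteq C$), so $x$ is a nonzerodivisor and in particular $x\neq 0$. Since $xT\subseteq Q$, the collection of primes $P$ with $xT\subseteq P\subseteq Q$ is nonempty; by Zorn's lemma pick $P'$ minimal in it. Any prime between $xT$ and $P'$ also lies between $xT$ and $Q$, so $P'$ is in fact a minimal prime of $xT$; by Krull's principal ideal theorem $\mbox{ht}(P')\le 1$, and since $x\in P'$ is regular, $P'$ cannot be a minimal prime of $T$, forcing $\mbox{ht}(P')=1$. Finally, for any $P\in\mbox{Ass}(T)$ or $P\in\mbox{Ass}(T/rT)$ with $0\neq r\in R$ we have $P\in C$, hence $x\notin P$, and since $x\in P'$ this gives $P'\not\subseteq P$; the identical argument with $Q_j\in X\subseteq C$ gives $P'\not\subseteq Q_j$ whenever $X\neq\emptyset$. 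There is no genuinely hard step here; the only points requiring care are the observation that the hypotheses force $M\notin C$ (so that Lemma \ref{countableprimeavoidance} is applicable) and the standard height bookkeeping ensuring $\mbox{ht}(P')=1$ rather than $0$.
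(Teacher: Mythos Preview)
Your proof is correct and follows essentially the same approach as the paper: assemble the countable set $C$, apply Lemma~\ref{countableprimeavoidance} with $I=Q$ and $D=\{0\}$ to find a regular element in $Q$ avoiding every member of $C$, take $P'$ to be a minimal prime of that element inside $Q$, and then pull the avoidance back to $P'$. Your explicit verification that $M\notin C$ and that $\mbox{ht}(P')=1$ rather than merely $\le 1$ are details the paper's proof leaves implicit.
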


\begin{proof}
Use Lemma \ref{countableprimeavoidance} with $$C = \mbox{Ass}T \cup \{ P \in \Spec T \, | \, P \in \mbox{Ass}(T/rT) \mbox{ for some } 0 \neq r \in R \} \cup X,$$ $D = \{0\}$ and $I = Q$ to find $q \in Q$ such that $q \not\in Q_j$ for all $j = 1,2, \ldots ,n$ and, if $P \in \mbox{Ass}(T)$ or $P \in \mbox{Ass}(T/rT)$ for some $0 \neq r \in R$, then $q \not\in P$. Then $q$ is a nonzero regular element of $T$. Let $P'$ be a minimal prime ideal of $qT$ contained in $Q$. By the principal ideal theorem, ht$P' = 1$. Now suppose $P$ is a prime ideal of $T$ such that $P \in \mbox{Ass}(T)$ or $P \in \mbox{Ass}(T/rT)$ for some $0 \neq r \in R$, and assume $P' \subseteq P$. Then $q \in P' \subseteq P$, a contradiction. 
Similarly, if $P' \subseteq Q_j$ for some $j \in \{1,2, \ldots ,n\}$, then $q \in Q_j$, a contradiction.
\end{proof}

Note that, in the above lemma, if $Q$ contains only one minimal prime ideal of $T$ then $P'$ also contains only one minimal prime ideal of $T$.

In the next lemma, we show that, given an N-subring $R$ of $T$ and given certain height one prime ideals $P_1, \ldots ,P_s$ of $T$, we can adjoin a special element $\xt$ of $T$ to $R$ and obtain another N-subring of $T$.  Specifically, $\xt$ will satisfy the property that it is in $P_i$ for all $i = 1,2, \ldots ,s$. Our final UFD $A$ will satisfy the property that $A \cap P_i = \xt A$ for all $i = 1,2, \ldots ,s.$  In other words, the prime ideals $P_1, \ldots ,P_s$ will be glued together in $A$.


\begin{lem} \label{transcendental}
    Let $(T, M)$ be a complete local ring with depth$T \geq 2$ and suppose 
    $R$ is a countable N-subring of $T$. Let $P_1, \dots, P_s$ be height one prime ideals of $T$ such that, for every $i = 1,2, \ldots ,s$ we have that if $P \in \mbox{Ass}(T)$ or $P \in \mbox{Ass}(T/rT)$ for some $0 \neq r \in R$ then $P_i \not\subseteq P.$ 
   Let $X$ be a (possibly empty) finite set of prime ideals of $T$ such that $P_i \not\subseteq Q$ for every $Q \in X$ and for every $i = 1,2, \ldots ,s$. 
    Then, there exists $\xt \in \bigcap^s_{i = 1} P_i$ with $\xt \not\in \bigcup_{Q \in X}Q$ such that $S = R[\xt]_{(M \cap R[\xt])}$ is an N-subring of $T$ with $S \cap P_i = \xt S$ for every $i = 1,2, \ldots ,s.$ Moreover, prime elements in $R$ are prime in $S$.
\end{lem}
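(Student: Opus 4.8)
The plan is to produce $\xt$ by a single prime-avoidance argument and then invoke Lemma \ref{loepp}. The point to watch is that we want $\xt$ to lie in every $P_i$, so the $P_i$ themselves \emph{cannot} be among the primes modulo which we demand transcendence; we only forbid the ``bad'' primes. Set
\[
C = \mbox{Ass}T \cup \{P \in \Spec T \mid P \in \mbox{Ass}(T/rT)\text{ for some }0 \neq r \in R\} \cup X .
\]
This $C$ is countable (since $R$ is countable and $X$ is finite) and avoids $M$: because $\mathrm{depth}\,T \geq 2$ and every nonzero element of $R$ is regular in $T$ by the second N-subring condition, $M$ lies in no $\mbox{Ass}(T/rT)$, nor in $\mbox{Ass}T$, nor in $X$ (as $P_i \not\subseteq M$ is impossible). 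I would also record at the start that $R \cap P_i = (0)$ for each $i$: a nonzero $r \in R \cap P_i$ is regular in $T$, so the height one prime $P_i$ would be a minimal, hence associated, prime of $T/rT$, contradicting the hypothesis that $P_i \not\subseteq P$ for $P \in \mbox{Ass}(T/rT)$.

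Let $I = \bigcap_{i=1}^s P_i$, an ideal of $T$. For each $P \in C$ we have $I \not\subseteq P$, since otherwise some $P_i \subseteq P$, contradicting the hypotheses on the $P_i$ (recall $X \subseteq C$). For each $P \in C$ let $D_{(P)}$ be a full set of coset representatives in $T$ for those $t + P \in T/P$ that are algebraic over $R/(R \cap P)$; as $R$ is countable, its algebraic closure in $T/P$ is countable, so $D_{(P)}$ is countable. Put $D = \{0\} \cup \bigcup_{P \in C} D_{(P)}$, which is countable. By Lemma \ref{countableprimeavoidance} there is $\xt \in I$ with $\xt \notin \bigcup\{(P + r) \mid P \in C,\ r \in D\}$. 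Then $\xt \in \bigcap_{i=1}^s P_i$; for every $P \in C$ we have $\xt \notin P$ (taking $r = 0$) and $\xt + P$ transcendental over $R/(R \cap P)$; in particular $\xt \notin \bigcup_{Q \in X} Q$.

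Now fix $\mathfrak{p}_0 \in \mbox{Ass}T$ (nonempty since $T$ is Noetherian and nonzero). Then $\mathfrak{p}_0 \cap R = (0)$ and $C$ satisfies conditions (1)--(4) of Lemma \ref{loepp} relative to $\mathfrak{p}_0$, so Lemma \ref{loepp} applies to $\xt$ and gives that $S = R[\xt]_{(M \cap R[\xt])}$ is a countable N-subring of $T$. Since $\xt + \mathfrak{p}_0$ is transcendental over $R/(R \cap \mathfrak{p}_0) = R$, the element $\xt$ is transcendental over $R$, so $R[\xt]$ is a polynomial ring over the UFD $R$; hence prime elements of $R$ stay prime in $R[\xt]$, and so in the localization $S$. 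Finally, since $\xt \in P_i$, every $f = \sum_{j=0}^n r_j \xt^j \in R[\xt]$ is congruent to $r_0$ modulo $P_i$, so $f \in P_i$ forces $r_0 \in R \cap P_i = (0)$ and hence $f \in \xt R[\xt]$; thus $R[\xt] \cap P_i = \xt R[\xt]$, and since the denominators used in forming $S$ lie outside $M \supseteq P_i$, passing to the localization yields $S \cap P_i = \xt S$ for every $i$.

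The bulk of the work is bookkeeping: assembling $C$ and the countable set $D$ so that one application of Lemma \ref{countableprimeavoidance} secures transcendence modulo every member of $C$ simultaneously, and checking the hypotheses of Lemma \ref{loepp}. The one genuinely substantive observation — the reason the gluing conclusion $S \cap P_i = \xt S$ falls out so cheaply — is that $R \cap P_i = (0)$, which comes from nonzero elements of an N-subring being regular in $T$ together with the assumption that $P_i$ avoids every associated prime of $T$ and of every $T/rT$. I do not anticipate any harder obstacle.
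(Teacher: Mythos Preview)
Your proof is correct and a bit more streamlined than the paper's. The paper uses two successive applications of Lemma \ref{countableprimeavoidance}: first to pick $x_i \in P_i$ avoiding every $P \in C$, then setting $x = \prod_i x_i$ and applying avoidance again on $M$ to find $\alpha$ with $\xt = x(1+\alpha)$ transcendental modulo each $P \in C$. You instead apply Lemma \ref{countableprimeavoidance} once to $I = \bigcap_i P_i$ (using that a prime containing a finite intersection contains one of the factors, so $I \not\subseteq P$ for all $P \in C$), picking $\xt$ directly. This is a genuine simplification.

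The other difference is in the verification of $S \cap P_i = \xt S$. The paper argues that $\xt$ is regular and $P_i$ has height one, so $P_i \in \mbox{Ass}(T/\xt T)$, and then invokes condition (3) of the N-subring definition to force $\mathrm{ht}(S \cap P_i) \leq 1$, whence $S \cap P_i = \xt S$. You instead first observe $R \cap P_i = (0)$ (nonzero elements of $R$ are regular, so a height one $P_i$ containing such an $r$ would lie in $\mbox{Ass}(T/rT)$) and then do a direct polynomial computation in $R[\xt]$. Both arguments are sound; yours is more elementary and self-contained, while the paper's shows how the N-subring axioms are tailored to make such conclusions automatic.
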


\begin{proof}
    Define $$C = \mbox{Ass}T \cup \{ P \in \Spec T \, | \, P \in \mbox{Ass}(T/rT) \mbox{ for some } 0 \neq r \in R \} \cup X$$
    and note that $C$ is countable and $M \not\in C$. Now, if $i  \in \{1,2, \ldots ,s\}$, we have $P_i \not\subseteq P$ for all $P \in C$.
    For every $i = 1,2, \ldots,s$, apply Lemma \ref{countableprimeavoidance}  to find $x_i \in P_i$ such that $x_i \notin P$ for all $P \in C$. Define $x = \prod_{i = 1}^s x_i$ and note that $x \in \bigcap_{i = 1}^s P_i$ and $x \not\in P$ for every $P \in C$. Now, fix $P \in C$ and let $t,t' \in T$. If $x(1 + t) + P = x(1 + t') + P$ as elements of $T/P$ then $x(t-t') \in P$. Since $x \not\in P$, we have $t-t' \in P$ and so $t + P = t' + P$.  It follows that if $t + P \neq t' + P$ then $x(1 + t) + P \neq x(1 + t') + P$. Let $D_{(P)}$ be a full set of coset representatives for the cosets $t + P$ that make $x(1 + t) + P$ algebraic over $R/(P \cap R)$, and note that $D_{(P)}$ is countable. Let $D = \bigcup_{P \in C} D_{(P)}$.
%
    Use Lemma \ref{countableprimeavoidance}, to find $\alpha \in M$ so that $x(1 + \alpha) + P \in T/P$ is transcendental over $R/(R \cap P)$ for every $P \in C$. Define $\xt = x(1 + \alpha)$. Then $\xt \not\in \bigcup_{Q \in X}Q$. By Lemma \ref{loepp}, $S = R[\xt]_{(M \cap R[\xt])}$ is an N-subring of $T$. Since $\xt$ is transcendental over $R$, prime elements of $R$ are prime in $S$.
    Fix $i \in \{1,2, \ldots ,s\}$. Since $\xt \in S$ and $\xt \in P_i$, we have $\xt S \subseteq S \cap P_i$. Since $R$ is a domain, $\xt S$ is a prime ideal of $S$. Now $\xt$ is a regular element of $T$ and $P_i$ is a height one prime ideal of $T$. It follows that $P_i \in \mbox{Ass}(T/\xt T)$. As $S$ is an N-subring of $T$ we have ht$(S \cap P_i) = 1$ and so $S \cap P_i = \xt S$.
\end{proof}

\section{The Main Result} \label{MainResult}

In this section, we construct the desired local UFD. We do so by first identifying infinitely many prime ideals $Q$ of our complete local ring $T$ that satisfy some specific properties, with the most important property being that dim$(T/Q) = 1$. We then note that a localization of the prime subring of $T$ is an N-subring, and we use Lemma \ref{generatingsets} to adjoin generators of our infinite set of identified prime ideals to obtain another N-subring of $T$. We then alternately use Lemma \ref{findingheightones} and Lemma \ref{transcendental} infinitely many times to find appropriate height one prime ideals of $T$ that will be glued together, along with elements that will generate the prime ideals of our UFD that the height one prime ideals will glue to. Finally, we rely on results from \cite{heitmann} to finish the construction our UFD $A$ and we show that $A$ contains our desired chains of prime ideals. We begin this process with a preliminary lemma.


\begin{lem}[\cite{small}, Lemma 2.8]\label{chains}
Let $(T,M)$ be a local ring with $M \not\in \mbox{Ass}(T)$, and let $P$ be a minimal prime ideal of $T$ with dim$(T/P) = n$. Then there exists a saturated chain of prime ideals of $T$, $P \subsetneq Q_1 \subsetneq \cdots \subsetneq Q_{n - 1} \subsetneq M$, such that, for each $i = 1,2, \ldots ,n - 1$, $Q_i \not\in \mbox{Ass}T$ and $P$ is the only minimal prime ideal contained in $Q_i$.
\end{lem}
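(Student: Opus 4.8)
The plan is to build the saturated chain from the bottom up, starting at $P$ and using the prime avoidance lemma (Lemma~\ref{countableprimeavoidance}) repeatedly to climb one step at a time, each time choosing the next prime to avoid all the associated primes of $T$ and to keep $P$ as the only minimal prime below it. The key structural fact is that $\dim(T/P) = n$, so $T/P$ has a chain of primes of length $n$ from $(0)$ to $M/P$, and since $P$ is minimal, heights in $T/P$ track heights in $T$ appropriately. I would proceed by induction on $n$; the base case $n=1$ is trivial (the chain is just $P \subsetneq M$, and there are no $Q_i$ to worry about).

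\textbf{Inductive step.} Assume the result for rings of the relevant smaller dimension. Given $P$ minimal in $T$ with $\dim(T/P) = n \geq 2$, I first want to produce $Q_1$ with $P \subsetneq Q_1$, $\height(Q_1/P) = 1$, $\dim(T/Q_1) = n-1$, $Q_1 \notin \mbox{Ass}(T)$, and $P$ the only minimal prime of $T$ contained in $Q_1$. To find $Q_1$: pick any prime $\mathfrak{q}$ with $P \subsetneq \mathfrak{q} \subsetneq M$ realizing $\dim(T/\mathfrak{q}) = n-1$ (exists since $\dim(T/P)=n$). Now apply Lemma~\ref{countableprimeavoidance} inside $T$ with $C = \mbox{Ass}(T) \cup \{\text{minimal primes of } T \text{ other than } P\}$ (a finite, hence countable, set, none equal to $M$ since $M \notin \mbox{Ass}(T)$ and $\dim T \geq n \geq 2$), $D = \{0\}$, and $I = \mathfrak{q}$ — but I need $\mathfrak{q}$ not contained in any single member of $C$. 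Since $P \subsetneq \mathfrak{q}$ and $P$ is not contained in any other minimal prime, and $\mathfrak{q} \not\subseteq$ any associated prime would need justification; more carefully, I should choose $\mathfrak{q}$ first to dodge the finitely many associated primes and extra minimal primes that do not contain $P$ — this is possible because those primes cannot contain $P$ properly in a way forcing containment of $\mathfrak{q}$... the cleanest route is: use prime avoidance to pick an element $a \in \mathfrak{q}$ avoiding every $P' \in C$, note $P + aT \subseteq \mathfrak{q}$, let $Q_1$ be a minimal prime of $P + aT$ contained in $\mathfrak{q}$; then $\height(Q_1/P) = 1$ by Krull's principal ideal theorem applied in $T/P$, so $Q_1 \subsetneq \mathfrak{q}$ forces nothing bad, and $\dim(T/Q_1) \geq n-1$; and $Q_1 \notin \mbox{Ass}(T)$, $Q_1$ contains no minimal prime other than $P$, since $a \in Q_1$ avoids all of those. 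One then checks $\dim(T/Q_1) = n-1$ exactly (it is $\leq n-1$ since $Q_1 \supsetneq P$ and saturated-ness, $\geq n-1$ by the chain through $\mathfrak{q}$, being careful that $Q_1 \subseteq \mathfrak q$).

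\textbf{Finishing.} Having found $Q_1$, apply the inductive hypothesis to the ring $T' = T/(\text{the image we need})$ — actually more directly, apply Lemma~\ref{chains} recursively: $Q_1$ now plays the role of a prime with $P$ its unique minimal prime and $\dim(T/Q_1) = n-1$; but the statement is phrased for $P$ a minimal prime, so I would instead rerun the one-step argument $n-1$ times, at stage $i$ producing $Q_i \supsetneq Q_{i-1}$ with $\height(Q_i/Q_{i-1}) = 1$, $\dim(T/Q_i) = n-i$, $Q_i \notin \mbox{Ass}(T)$, and (automatically, since $Q_i \supseteq Q_1 \supseteq P$ and the only minimal prime under $Q_1$ was $P$) $P$ the unique minimal prime under $Q_i$. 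After $n-1$ steps, $\dim(T/Q_{n-1}) = 1$, so $Q_{n-1} \subsetneq M$ with no primes strictly between, and the chain $P \subsetneq Q_1 \subsetneq \cdots \subsetneq Q_{n-1} \subsetneq M$ is saturated because each link was forced to have height $1$ over its predecessor and the top link is height $1$ as well.

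\textbf{The main obstacle} I anticipate is the bookkeeping that ensures $\dim(T/Q_i)$ drops by exactly one and that the resulting chain is genuinely saturated — prime avoidance easily gives a prime avoiding the bad sets, but pinning down that a minimal prime of $Q_{i-1} + aT$ has coheight exactly $n-i$ (not smaller) requires combining Krull's principal ideal theorem with the existence of the long chain through a pre-chosen $\mathfrak{q}$, and one must verify that the minimal prime $Q_i$ chosen inside $\mathfrak{q}$ still has coheight $n-i$ rather than accidentally being smaller than needed. Managing this cleanly, rather than the avoidance step itself, is where the care lies; the citation to \cite{small} suggests the original proof handles exactly this dimension-counting argument.
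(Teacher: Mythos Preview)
The paper does not prove this lemma; it is quoted verbatim from \cite{small} with no argument supplied, so there is no ``paper's own proof'' to compare against. That said, your outline has a genuine gap that would cause the argument to fail as written.

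The error is in the sentence ``$Q_1$ contains no minimal prime other than $P$, since $a \in Q_1$ avoids all of those.'' Knowing $a\in Q_1$ and $a\notin P'$ only gives $Q_1\not\subseteq P'$; it does \emph{not} prevent $P'\subseteq Q_1$. Concretely, take $T=k[[x,y,z]]/(xy)$, $P=(x)$, $P'=(y)$, and $a=x+y$. Then $a$ lies in neither minimal (hence neither associated) prime, yet $(P,a)=(x,y)$ is already prime, so $Q_1=(x,y)$ contains $P'$. The same mistake recurs in your inductive step: ``automatically, since $Q_i\supseteq Q_1$ and the only minimal prime under $Q_1$ was $P$'' is false, because a larger prime can pick up new minimal primes that $Q_1$ missed (e.g.\ in $k[[x,y,z,w]]/(xy)$ one can have $Q_1=(x,z)$ containing only $(x)$, while $Q_2=(x,y,z)\supsetneq Q_1$ contains $(y)$ as well).

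The fix is straightforward and fits your framework: at stage $i$, enlarge the avoidance set $C$ to include, for each minimal prime $P'\neq P$, the finitely many minimal primes of $Q_{i-1}+P'$ that have height one over $Q_{i-1}$ (these are the only candidates for a height-one $Q_i$ that could swallow $P'$). None of these equals $M$ since $\dim(T/Q_{i-1})\geq 2$ at every stage before the last, so ordinary finite prime avoidance still produces a suitable $a_i\in M$. With this correction your dimension-counting (via a pre-chosen target $\mathfrak q$ and Krull's principal ideal theorem in $T/Q_{i-1}$) goes through. A minor remark: Lemma~\ref{countableprimeavoidance} is stated for complete local rings, while Lemma~\ref{chains} does not assume completeness; since your avoidance set is finite, you only need classical prime avoidance here anyway.
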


We use Lemma \ref{findQ} to identify our infinite set of prime ideals of $T$ whose coheight is one.

\begin{lem} \label{findQ}
      Let $(T,M)$ be a local catenary ring with depth$(T)\geq 2$ and let $P_0$ be a minimal prime ideal of $T$ with dim$(T/P_0) = n \geq 3$. 
      Then there are infinitely prime ideals $Q$ of $T$ satisfying $P_0 \subseteq Q,$ $P_0$ is the only minimal prime ideal contained in $Q$, dim$(T/Q) = 1$, and if $P \in \Spec T$ such that $P \in \mbox{Ass}(T)$ or $P \in \mbox{Ass}(T/zT)$ for some nonzero regular element $z \in T$, then $Q \not\subseteq P$.
\end{lem}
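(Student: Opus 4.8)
The idea is to manufacture one prime $\mathfrak q$ of coheight $2$ lying above $P_0$ and then read off the required infinite family from the height-one primes of the local domain $T/\mathfrak q$. First I would apply Lemma~\ref{chains} to the minimal prime $P_0$ (this is legitimate since $\operatorname{depth}(T)\ge 2$ forces $M\notin\operatorname{Ass}(T)$), obtaining a saturated chain $P_0\subsetneq Q_1\subsetneq\cdots\subsetneq Q_{n-1}\subsetneq M$ in which each $Q_i$ avoids $\operatorname{Ass}(T)$ and has $P_0$ as its unique minimal prime. Put $\mathfrak q:=Q_{n-2}$; since $n\ge 3$ this is one of the $Q_i$, and since $T$ is catenary the saturated tail $\mathfrak q\subsetneq Q_{n-1}\subsetneq M$ forces $\dim(T/\mathfrak q)=2$. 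I would also observe that $\mathfrak q$ is contained in no $P\in\operatorname{Ass}(T)$: such a containment would have to be proper (as $\mathfrak q\notin\operatorname{Ass}(T)$), whence $\dim(T/P)\le\dim(T/\mathfrak q)-1=1$, contradicting the standard inequality $\operatorname{depth}(T)\le\dim(T/P)$ for $P\in\operatorname{Ass}(T)$. Prime avoidance over the finite set $\operatorname{Ass}(T)$ then supplies a nonzero $T$-regular element $z\in\mathfrak q$, which I fix.

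The heart of the argument is a reformulation of the avoidance requirement. I claim that for a prime $Q$ of $T$ with $\dim(T/Q)=1$, the condition ``$Q\not\subseteq P$ for every $P\in\operatorname{Ass}(T)$ and every $P\in\operatorname{Ass}(T/wT)$ with $w$ a nonzero regular element'' is equivalent to $\operatorname{depth}(T_Q)\ge 2$. Indeed, since $\dim(T/Q)=1$, the only primes containing $Q$ are $Q$ and $M$, and $M$ lies in none of these sets because $M\notin\operatorname{Ass}(T)$ and $\operatorname{depth}(T/wT)=\operatorname{depth}(T)-1\ge 1$ for regular $w$; so the condition just says $Q\notin\operatorname{Ass}(T)$ and $Q\notin\operatorname{Ass}(T/wT)$ for all regular $w$. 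Localizing at $Q$ (and using that a $T$-regular element stays $T_Q$-regular), the first says $\operatorname{depth}(T_Q)\ne 0$; given that, $Q$ contains some regular $w_0$, and $Q\in\operatorname{Ass}(T/w_0T)$ if and only if $\operatorname{depth}(T_Q/w_0T_Q)=0$, that is, if and only if $\operatorname{depth}(T_Q)=1$. Hence the condition holds precisely when $\operatorname{depth}(T_Q)\ge 2$.

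It then remains to produce infinitely many primes $Q$ with $P_0\subseteq Q$, with $P_0$ the only minimal prime below $Q$, with $\dim(T/Q)=1$, and with $\operatorname{depth}(T_Q)\ge 2$. The ring $D:=T/\mathfrak q$ is a two-dimensional local (Noetherian) domain, hence has infinitely many height-one primes; each corresponds to a prime $Q$ of $T$ with $\mathfrak q\subseteq Q$ and $\operatorname{ht}(Q/\mathfrak q)=1$, and then $P_0\subseteq Q$ and $\dim(T/Q)=1$. From this infinite set I would discard two finite subcollections. First, for each minimal prime $P'\ne P_0$ of $T$ we have $P'\not\subseteq\mathfrak q$, so $(P'+\mathfrak q)/\mathfrak q$ is a nonzero ideal of the domain $D$ and lies in only finitely many height-one primes of $D$; removing, for each of the finitely many such $P'$, the corresponding $Q$'s leaves $P_0$ as the unique minimal prime below every surviving $Q$. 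Second, because $z\in\mathfrak q\subseteq Q$ is $T_Q$-regular, $\operatorname{depth}(T_Q)\ge 1$ automatically, while $\operatorname{depth}(T_Q)=1$ holds if and only if $\operatorname{depth}(T_Q/zT_Q)=0$, equivalently $Q\in\operatorname{Ass}(T/zT)$; since $\operatorname{Ass}(T/zT)$ is finite, only finitely many $Q$ have $\operatorname{depth}(T_Q)<2$. Every $Q$ surviving both deletions has all the desired properties, and infinitely many survive.

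The step I expect to be the real obstacle is the reduction of the second paragraph together with the second deletion in the third: one has to recognize that membership of a coheight-one prime in \emph{any} of the dangerous associated-prime sets is governed by the single invariant $\operatorname{depth}(T_Q)$, and that fixing one regular element $z$ inside $\mathfrak q$ converts the borderline case ``$\operatorname{depth}(T_Q)=1$'' into membership in the fixed finite set $\operatorname{Ass}(T/zT)$. The auxiliary point that $\operatorname{depth}(T)\ge 2$ guarantees $\mathfrak q$ meets a $T$-regular element (via $\operatorname{depth}(T)\le\dim(T/P)$ for $P\in\operatorname{Ass}(T)$) also needs care. Everything else is routine use of prime avoidance, the principal ideal theorem, and catenarity.
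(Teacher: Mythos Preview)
Your proposal is correct and follows essentially the same route as the paper's proof: both apply Lemma~\ref{chains}, set $\mathfrak q=Q_{n-2}$, extract a regular element of $\mathfrak q$ via the inequality $\operatorname{depth}(T)\le\dim(T/P)$ for $P\in\operatorname{Ass}(T)$, pass to the infinitely many primes lying saturated between $\mathfrak q$ and $M$, and discard the finitely many that contain a second minimal prime or lie in $\operatorname{Ass}(T)\cup\operatorname{Ass}(T/zT)$ for the fixed regular $z$. Your explicit reformulation of the avoidance condition as $\operatorname{depth}(T_Q)\ge 2$ is a clean way to package what the paper verifies inline at the end of its argument, but the underlying logic is identical.
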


\begin{proof}
Let $P_0 \subsetneq Q_1 \subsetneq \cdots \subsetneq Q_{n - 1} \subsetneq M$ be a saturated chain of prime ideals of $T$ obtained from Lemma \ref{chains} such that, for each $i = 1,2, \ldots ,n - 1$, $Q_i \not\in \mbox{Ass}T$ and $P_0$ is the only minimal prime ideal contained in $Q_i$. Suppose that $Q_{n - 2} \subseteq P$ for some $P \in \mbox{Ass}(T)$. As depth$(T) \geq 2$, $M \not\in \mbox{Ass}(T)$. Since $T$ is catenary and $Q_{n - 2} \not\in \mbox{Ass}(T)$, we have dim$(T/P)=1$. By Theorem 17.2 in \cite{matsumura}, $2 \leq \mbox{depth}(T) \leq \mbox{dim}(T/P) = 1$, a contradiction.  It follows that $Q_{n - 2} \not\subseteq P$ for all $P \in \mbox{Ass}(T)$. Thus, there exists a regular element $x \in T$ with $x \in Q_{n - 2}$.

Let $$X = \{Q \in \Spec(T) \, | \, Q_{n - 2} \subsetneq Q \subsetneq M \mbox{ is saturated} \}.$$ Since $T$ is Noetherian, $X$ has infinitely many elements. Suppose $Q \in X$ such that $Q$ contains $P_1$ where $P_1$ is a minimal prime ideal of $T$ satisfying $P_1 \neq P_0$. Then $Q$ is a minimal prime ideal of $Q_{n - 2} + P_1$, of which there are only finitely many.  The two sets $\mbox{Ass}(T)$ and $\mbox{Ass}(T/xT)$ are finite. Thus, the set $$Y = \{Q \in X \, | \, Q \not\in \mbox{Ass}(T), Q \not\in \mbox{Ass}(T/xT) \mbox{ and }$$ $$P_0 \mbox{ is the only minimal prime ideal of } T \mbox{ contained in } Q\}$$ contains infinitely many elements. Let $Q \in Y$. Then dim$(T/Q) = 1$ and if $P \in \mbox{Ass}(T)$ then $Q \not\subseteq P$.

Since $T_Q$ is a flat extension of $T$ and $x$ is a regular element of $T$, we have that $x$ is a regular element of $T_Q$. As $Q \not\in \mbox{Ass}(T/xT)$, the corollary to Theorem 6.2 in \cite{matsumura} gives that $QT_Q \not\in \mbox{Ass}(T_Q/xT_Q).$ It follows that depth$(T_Q) \geq 2$. Now suppose that $Q \subseteq P$ for some $P \in \mbox{Ass}(T/zT)$ where $z$ is a nonzero regular element of $T$. Then $P = M$ or $P = Q$. As depth$(T) \geq 2$, $M \not\in \mbox{Ass}(T/zT)$ and so $P = Q$. Therefore, $Q \in \mbox{Ass}(T/zT)$. It follows that $QT_Q \in \mbox{Ass}(T_Q/zT_Q)$, contradicting that depth$(T_Q) \geq 2$. Hence all elements of $Y$ satisfy the desired conditions.
\end{proof}

We are now ready to prove the main result of this paper.

\begin{thm} \label{constructionalt}
    Let $(T, M)$ be a complete local ring such that no integer of $T$ is a zerodivisor of $T$ and such that depth$(T) \geq 2$. Let $\{P_{0,1}, \dots, P_{0,s}\}$ be the minimal prime ideals of $T$ and suppose that for $i = 1,2, \ldots ,s,$ we have $\dim(T/P_{0,i}) = n_i \geq 3$. 
Then there exists a local UFD $(A, A \cap M)$ such that $\wh A = T$ and such that, for all $n \in \N$ and for all $i = 1,2, \ldots, s$, there exist saturated chains of prime ideals $(0) \subsetneq J_{1, n} \subsetneq J^{(i)}_{2, n} \subsetneq \dots \subsetneq J^{(i)}_{n_i - 1, n} \subsetneq M \cap A$ of $A$ satisfying $J^{(i)}_{a,b} = J^{(j)}_{c, d}$ if and only if $i = j$, $a = c$, and $b = d$, and $J_{1, n} = J_{1, m}$ if and only if $n = m$.
\end{thm}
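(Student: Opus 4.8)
The plan is to build $A$ as a nested union of countable N-subrings of $T$, using the lemmas of Section~\ref{Nsubrings} in the order described in the paragraph preceding Lemma~\ref{chains}, and then to invoke Heitmann's machinery from \cite{heitmann} to upgrade the final N-subring to a local UFD $A$ with $\widehat A = T$. First I would apply Lemma~\ref{findQ} separately to each minimal prime $P_{0,i}$ (legitimate since $T$ is complete, hence catenary, and $\dim(T/P_{0,i}) = n_i \geq 3$, and depth$(T)\ge 2$) to obtain, for each $i$, an infinite family of prime ideals $Q$ with $P_{0,i}\subseteq Q$, $P_{0,i}$ the unique minimal prime under $Q$, $\dim(T/Q)=1$, and $Q$ avoiding all associated primes of $T$ and of $T/zT$ for regular $z$. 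From these I would select, for each $n\in\N$ and each $i$, a prime $Q_{n}^{(i)}$ (all distinct, which is possible since each family is infinite), and I would also fix, once and for all using Lemma~\ref{chains} applied with $P = P_{0,i}$, saturated chains in $T$ witnessing the coheights. Starting from $R_0 = $ a suitable localization of the prime subring of $T$ (which is an N-subring by standard facts, using that no integer is a zerodivisor so $R_0\cap\mathfrak p=(0)$ can be arranged for a chosen nonmaximal $\mathfrak p$, e.g.\ $\mathfrak p = P_{0,1}$), I would use Lemma~\ref{generatingsets} to adjoin generating sets for all the $Q_n^{(i)}$ simultaneously, obtaining a countable N-subring $R_1$ containing a generating set for each $Q_n^{(i)}$.

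Next comes the gluing. For each $n$, I want a height one prime $P_{1,n}'\subseteq T$ lying below $Q_n^{(i)}$ for all $i$ simultaneously; the point of choosing the $Q_n^{(i)}$ from Lemma~\ref{findQ} is precisely that such a common height one prime can be extracted, though in fact the cleaner route is: for each fixed $n$, all the $Q_n^{(i)}$ sit over distinct minimal primes, so instead I would glue \emph{per $(i,n)$} — pick via Lemma~\ref{findingheightones} a height one prime $P_n^{(i)}\subseteq Q_n^{(i)}$ avoiding the relevant bad primes and avoiding the finitely many previously chosen $Q$'s and $P$'s — and then these $P_n^{(i)}$, together with one more height one prime below each $Q_n^{(i)}$ for the lower vertex $J_{1,n}$, get fed to Lemma~\ref{transcendental}. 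Concretely: enumerate the countably many tasks (adjoin a transcendental element making the prescribed height one primes glue; this generates $J_{1,n}$ as $\widetilde x_n A$ and the $J_{a,n}^{(i)}$ as the contractions of the $Q$'s and of intermediate primes in the fixed saturated chains). Alternating Lemma~\ref{findingheightones} (to locate the height one primes to be glued, keeping them away from all finitely many previously used primes) and Lemma~\ref{transcendental} (to adjoin the gluing element and verify $S\cap P_i = \widetilde x S$), and taking the union via Lemma~\ref{union}, I get a countable N-subring $R_\infty$ of $T$ in which: $R_\infty$ contains generators of every $Q_n^{(i)}$, contains the gluing elements $\widetilde x_n$ with $\widetilde x_n R_\infty = R_\infty\cap P_n^{(i)}$ for every $i$, and prime elements stay prime throughout.

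Then I would cite the final-construction step from \cite{heitmann} (the theorem that takes an N-subring satisfying a further condition — essentially that it already contains generating sets of enough prime ideals so that $\mathrm{Spec}\,T \to \mathrm{Spec}\,A$ does the right thing — and produces a local UFD $A\supseteq R_\infty$ with $\widehat A = T$, $A$ a UFD, and $A\cap P = (P\cap R_\infty)A$ for the relevant $P$). Given such $A$, the chains are produced as follows: the fixed saturated chain $P_{0,i}\subsetneq Q_{1}^{(i)\mathrm{-chain}}\subsetneq\cdots\subsetneq M$ of length $n_i$ in $T$ from Lemma~\ref{chains}, with $Q_n^{(i)}$ inserted as the coheight-one member, contracts to a chain in $A$; saturation and injectivity on these chains follow because $P_{0,i}$ is the unique minimal prime below each member (so contraction to $A$, whose completion is $T$, neither merges nor skips — this is where "$\widehat A = T$" plus the "only one minimal prime below" condition does the work), and the bottom link $(0)\subsetneq J_{1,n}$ is realized by $\widetilde x_n A$, which lies in all the chains indexed by that $n$ and in no chain indexed by $m\ne n$ because $\widetilde x_m$ was chosen transcendental/avoiding at each stage. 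The equalities and inequalities $J_{a,b}^{(i)} = J_{c,d}^{(j)} \iff i=j, a=c, b=d$ and $J_{1,n}=J_{1,m}\iff n=m$ then reduce to: the $Q_n^{(i)}$ and the intermediate chain primes are pairwise distinct in $T$ and lie over distinct minimal primes for distinct $i$, and the $\widetilde x_n$ generate distinct height one primes of $A$ — all of which were arranged during the inductive construction by the avoidance clauses.

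The main obstacle I expect is \textbf{bookkeeping the avoidance conditions across the infinite alternating construction} so that at every stage the hypotheses of Lemmas~\ref{findingheightones} and~\ref{transcendental} are met — in particular ensuring the newly chosen height one primes are not contained in any previously selected $Q_n^{(i)}$ or $P_n^{(j)}$ (finitely many at each stage, so Lemma~\ref{findingheightones}'s finite set $X$ suffices) and that the gluing elements remain transcendental over the growing subring modulo every prime in the (countable, $M$-excluding) set $C$ — together with verifying that the contraction of each prescribed saturated chain in $T$ remains saturated in $A$ and that no two of the resulting $J$'s coincide except as asserted; this last point is exactly where one must use that each relevant prime of $T$ sits over a single minimal prime and that $\widehat A = T$, matching the strategy outlined in the introduction.
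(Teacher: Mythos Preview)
Your overall architecture---choose the $Q_n^{(i)}$ via Lemma~\ref{findQ}, start from the localized prime subring, adjoin generating sets for all $Q_n^{(i)}$ via Lemma~\ref{generatingsets}, alternate Lemmas~\ref{findingheightones} and~\ref{transcendental} to produce height-one primes $P_n^{(i)}$ and gluing elements $\widetilde x_n$, take the union via Lemma~\ref{union}, and then invoke Heitmann's Theorem~8---matches the paper essentially step for step, and your avoidance bookkeeping is fine.

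The genuine gap is in how you produce the intermediate primes $J_{t,n}^{(i)}$ for $t\geq 2$ and argue they are distinct. You propose to \emph{fix} saturated chains in $T$ in advance (via Lemma~\ref{chains}) and then contract them to $A$, asserting that ``saturation and injectivity on these chains follow because $P_{0,i}$ is the unique minimal prime below each member.'' That inference is not valid. The unique-minimal-prime condition says nothing about whether two primes $Q\subsetneq Q'$ of $T$ have distinct contractions to $A$; they coincide in $A$ unless $A$ happens to contain an element of $Q'\setminus Q$. You only arranged for $A$ to contain generators of the coheight-one primes $Q_n^{(i)}$, not of the intermediate members of your fixed chains, so nothing prevents those intermediate primes from collapsing in $A$. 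The same issue undermines your claim that the distinctness conditions ``reduce to'' the chain primes being pairwise distinct in $T$: distinct primes of $T$ can, and in Heitmann-type constructions often do, have the same intersection with $A$.

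The paper avoids this by \emph{not} fixing the intermediate primes in advance. After $A$ is built it argues $\widetilde x_n A \subsetneq Q_n^{(i)}\cap A$ (using $\dim(A/(Q_n^{(i)}\cap A)) = \dim(T/Q_n^{(i)}) = 1$, which needs $(A\cap Q_n^{(i)})T = Q_n^{(i)}$), then chooses $p_n^{(i)}\in A$ inside $Q_n^{(i)}\cap A$ but outside $\widetilde x_n A$ and outside $Q_n^{(k)}$ for $k\neq i$, and lets $Q_{2,n}^{(i)}$ be a minimal prime of $P_n^{(i)} + p_n^{(i)}T$ inside $Q_n^{(i)}$. Because $p_n^{(i)}\in A$, the contraction $J_{2,n}^{(i)} = Q_{2,n}^{(i)}\cap A$ is forced strictly above $J_{1,n}$, and faithful flatness bounds its height so the inclusion is saturated. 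One then iterates, each time choosing the next element from $A$. The distinctness $J_{a,b}^{(i)}\neq J_{c,d}^{(j)}$ is then witnessed by actual elements of $A$ (the $p_n^{(i)}$ and the $\widetilde x_n$), not by an abstract appeal to $\widehat A = T$. Your proposal would work if you either (a) adjoined generating sets for all intermediate chain primes as well---but then you must verify for each such prime the hypothesis of Lemma~\ref{generatingsets} concerning $\mathrm{Ass}(T/zT)$, which Lemma~\ref{chains} does not supply---or (b) switch to the paper's post-hoc construction of the intermediate primes using elements of $A$.
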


\begin{proof}
Use Lemma \ref{findQ} to find prime ideals $Q_{n}^{(i)}$ of $T$ such that, for every $i = 1,2, \ldots, s$ and for all $n \in \mathbb{N}$, we have $Q_{n}^{(i)} = Q_{m}^{(i)}$ if and only if $n = m$, $P_{0,i} \subseteq Q_{n}^{(i)},$ $P_{0,i}$ is the only minimal prime ideal of $T$ contained in $Q_{n}^{(i)}$, dim$(T/Q_{n}^{(i)}) = 1$, and if $P \in \Spec T$ such that $P \in \mbox{Ass}(T)$ or $P \in \mbox{Ass}(T/zT)$ for some nonzero regular element $z \in T$, then $Q_{n}^{(i)} \not\subseteq P$.

Let $\Pi$ be the prime subring of $T$ and let $R$ be $\Pi$ localized at $\Pi \cap M$. Then $R$ is a countable N-subring of $T$. By Lemma \ref{generatingsets}, there is a countable N-subring $R_0$ of $T$ that contains a generating set for $Q_{n}^{(i)}$ for every $i = 1,2, \ldots ,s$ and for every $n \in \mathbb{N}$.

Now use Lemma \ref{findingheightones} letting $X$ be the empty set to find height one prime ideals $P_1^{(i)}$ of $T$ for each $i = 1,2, \ldots ,s$ satisfying $P_1^{(i)} \subseteq Q_{1}^{(i)}$ and, if $P \in \mbox{Ass}(T)$ or $P \in \mbox{Ass}(T/rT)$ for some $0 \neq r \in R_0$, then $P_1^{(i)} \not\subseteq P$. By Lemma \ref{transcendental} there exists $\xt_1 \in \bigcap^s_{i = 1} P_1^{(i)}$ such that $R_1 = R_0[\xt_1]_{(M \cap R_0[\xt_1])}$ is an N-subring of $T$ with $R_1 \cap P_1^{(i)} = \xt_1 R_1$ for every $i = 1,2, \ldots ,s.$ Moreover, prime elements of $R_0$ are prime in $R_1$. Note that $R_1$ is countable.

Use Lemma \ref{findingheightones} letting $X = \{Q_1^{(i)}\}$ to find height one prime ideals $P_2^{(i)}$ of $T$ for each $i = 1,2, \ldots ,s$ satisfying $P_2^{(i)} \subseteq Q_{2}^{(i)}$ and, if $P \in \mbox{Ass}(T)$ or $P \in \mbox{Ass}(T/rT)$ for some $0 \neq r \in R_1$, then $P_2^{(i)} \not\subseteq P$. Moreover, $P_2^{(i)} \not\subseteq Q_1^{(i)}$ for all $i = 1,2, \ldots ,s$. 
Suppose $P_2^{(i)} \subseteq Q_1^{(k)}$ for some $k \neq i$. Then $Q_1^{(k)}$ contains $P_{0,i}$ and $P_{0,k}$, a contradiction.  It follows that $P_2^{(i)} \not\subseteq Q_1^{(k)}$ for all $i = 1,2, \ldots ,s$ and for all $k = 1,2, \ldots ,s.$

By Lemma \ref{transcendental} using $X = \{Q_1^{(1)}, Q_1^{(2)}, \ldots ,Q_1^{(s)}\}$ there exists $\xt_2 \in \bigcap^s_{i = 1} P_2^{(i)}$ with $\xt_2 \not\in Q_1^{(i)}$ for all $i = 1,2, \ldots ,s$ such that $R_2 = R_1[\xt_2]_{(M \cap R_1[\xt_2])}$ is an N-subring of $T$ with $R_2 \cap P_2^{(i)} = \xt_2 R_2$ for every $i = 1,2, \ldots ,s.$ Moreover, prime elements of $R_1$ are prime in $R_2$.  In particular, $\xt_1$ is a prime element of $R_2$.  Note that $R_2$ is countable.

Repeat this process using Lemma \ref{findingheightones} with $X = \{Q_1^{(i)}, Q_2^{(i)}\}$ to find height one prime ideals $P_3^{(i)}$ and then Lemma \ref{transcendental} to find an element $\xt_3$ of $T$ and an N-subring $R_3$ of $T$. Continue inductively so that, if $P_{n - 1}^{(i)}$ for every $i = 1,2, \ldots ,s$, $\xt_{n - 1}$, and $R_{n - 1}$ have been defined, use Lemma \ref{findingheightones} with $X = \{Q_1^{(i)}, Q_2^{(i)}, \ldots ,Q_{n - 1}^{(i)}\}$ and Lemma \ref{transcendental} with $X = \bigcup_{i = 1}^s \{Q_1^{(i)}, Q_2^{(i)}, \ldots ,Q_{n - 1}^{(i)}\}$ to define the following:
\begin{itemize}
  \item height one prime ideals $P_n^{(i)}$ of $T$ for each $i = 1,2, \ldots ,s$ satisfying $P_n^{(i)} \subseteq Q_{n}^{(i)}$ and, if $P \in \mbox{Ass}(T)$ or $P \in \mbox{Ass}(T/rT)$ for some $0 \neq r \in R_{n - 1}$, then $P_n^{(i)} \not\subseteq P$. Moreover, for all $i = 1,2, \ldots ,s$ and for all $k = 1,2, \ldots ,s$, $P_n^{(i)} \not\subseteq Q_j^{(k)}$ whenever $j < n$.
  \item an element $\xt_n$ of $T$ such that $\xt_n \in \bigcap^s_{i = 1} P_n^{(i)}$ and, for all $i = 1,2, \ldots ,s$ we have $\xt_n \not\in Q_j^{(i)}$ whenever $j < n$.
  \item a countable N-subring $R_n$ of $T$ containing $\xt_1, \xt_2, \ldots ,\xt_n$ such that $R_n \cap P_n^{(i)} = \xt_n R_n$ for every $i = 1,2, \ldots ,s.$ Moreover, prime elements of $R_{n - 1}$ are prime in $R_n$.  In particular, $\xt_1, \xt_2, \ldots ,\xt_n$ are prime elements of $R_n$.
\end{itemize}
By Lemma \ref{union}, $S = \bigcup_{i = 1}^{\infty} R_n$ is a countable N-subring of $T$ and $\xt_n$ is a prime element of $S$ for every $n \in \mathbb{N}$.

At this point, we use the construction in \cite{heitmann} to build our UFD $A$. In particular, in the proof of Theorem 8 in \cite{heitmann}, one starts with a complete local ring $\tilde{T}$ and a localization of the prime subring of $\tilde{T}$ and then constructs a UFD whose completion is $\tilde{T}$. For our proof, we replace the localization of the prime subring of $T$ in the proof of Theorem 8 in \cite{heitmann} with the N-subring $S$ above. We then follow the proof of Theorem 8 in \cite{heitmann} to construct a UFD $(A,A \cap M)$. In particular, $A$ contains $S$, $\widehat{A} \cong T$, and prime elements of $S$ are prime in $A$. Since $A$ contains $S$, it contains a generating set for each $Q_n^{(i)}$ and so $(A \cap Q_n^{(i)})T = Q_n^{(i)}$. Also note that $\xt_n$ is a prime element of $A$ for every $n \in \mathbb{N}$ and, for every $i = 1,2, \ldots ,s$ and for every $n \in \mathbb{N}$, we have $A \cap P_n^{(i)} = \xt_n A$.
Because $T/Q_n^{(i)} = T/(A \cap Q_n^{(i)})T$ is the completion of $A/(A \cap Q_n^{(i)})$, we have $1 = \mbox{dim}(T/Q_n^{(i)}) = \mbox{dim}(A/(A \cap Q_n^{(i)})$ for every $i = 1,2, \ldots ,s$ and for every $n \in \mathbb{N}$.

For all $n \in \mathbb{N}$, define $J_{1,n} = P_n^{(i)} \cap A = \xt_nA$ and note that $J_{1,n} = J_{1,m}$ if and only if $n = m$. We now define $J_{2,n}^{(i)}$ for all $i = 1,2, \ldots ,s$ and for all $n \in \mathbb{N}$.
Observe that $\xt_nA \subseteq Q_n^{(i)} \cap A$. Suppose we have $\xt_nA = Q_n^{(i)} \cap A$. Then there is a $y_n \in M \cap A$ with $y_n \not\in Q_n^{(i)} \cap A = \xt_nA$. In this case, $M \cap A$ is a minimal prime ideal of $(\xt_n,y_n)A$ and so, by the generalized principal ideal theorem, ht$(M \cap A) \leq 2$. It follows that dim$T = \mbox{dim}A \leq 2$, a contradiction. Hence $\xt_nA \subsetneq Q_n^{(i)} \cap A$. If $Q_n^{(i)} \cap A = Q_n^{(k)} \cap A$ where $i \neq k$, then $Q_n^{(i)} = Q_n^{(k)}$, a contradiction. Thus, by prime avoidance, there exists $p^{(i)}_{n} \in A$ such that $p^{(i)}_{n} \in Q_n^{(i)}$, $p^{(i)}_{n} \not\in \xt_nA$ and $p^{(i)}_{n} \not\in Q_n^{(k)}$ for $k \neq i$. Let $Q^{(i)}_{2,n}$ be a minimal prime ideal of $P_n^{(i)} + p^{(i)}_{n}T$ that is contained in $Q_n^{(i)}$ and note that $P_n^{(i)} \subsetneq Q^{(i)}_{2,n}$ is saturated. In particular, ht$Q^{(i)}_{2,n} = 2$. Define $J_{2,n}^{(i)} = Q^{(i)}_{2,n} \cap A$ and note that, since $T$ is a faithfully flat extension of $A$, ht$J_{2,n}^{(i)} \leq 2$. It follows that $(0) \subsetneq J_{1,n} \subsetneq J_{2,n}^{(i)}$ is saturated. Also observe that $J_{2,n}^{(i)} = J_{2,n}^{(k)}$ if and only if $i = k$.

In the case that $J_{2,n}^{(i)} = Q_n^{(i)} \cap A$, we have completed defining our chain.  So suppose $J_{2,n}^{(i)} \subsetneq Q_n^{(i)} \cap A.$ We define $Q_{t,n}^{(i)}$ and $J_{t,n}^{(i)}$ inductively for $t \geq 3$. Let $q^{(i)}_{t,n} \in Q_n^{(i)} \cap A$ with $q^{(i)}_{t,n} \not\in J_{t - 1,n}^{(i)},$ and let $Q_{t,n}^{(i)}$ be a minimal prime ideal of $Q_{t - 1,n}^{(i)} + q^{(i)}_{t,n}T$ that is contained in $Q_n^{(i)}$. Define $J_{t,n}^{(i)} = Q_{t,n}^{(i)} \cap A$. Continue until $J_{t,n}^{(i)} = Q_n^{(i)} \cap A.$  Note that if $t < \ell$, then $J_{t,n}^{(i)} \subsetneq J_{\ell,n}^{(i)}$.

Observe that $P_{0,i} \subsetneq P^{(i)}_n \subsetneq Q^{(i)}_{2,n} \subsetneq \cdots \subsetneq Q^{(i)}_{\ell,n} \subsetneq M$, where $Q^{(i)}_{\ell,n}$ is the last element in our chain from above, is saturated. Since $T$ is catenary, $\ell = n_i - 1$. By the going down property, the chain $(0) \subsetneq J_{1, n} \subsetneq J^{(i)}_{2, n} \subsetneq \dots \subsetneq J^{(i)}_{n_i - 1, n} \subsetneq M \cap A$ is saturated.



Suppose $J^{(i)}_{a,b} = J^{(k)}_{c, d}$ for some $b \neq d$.  Without loss of generality, assume that $b < d$. Then $\xt_d \in J^{(i)}_{a,b} \subseteq Q^{(i)}_b$, a contradiction, and so we have $b = d$ and $J^{(i)}_{a,b} = J^{(k)}_{c, b}$. Then $p_b^{(i)} \in Q_b^{(k)}$. If $i \neq k$ then this contradicts the way $p_b^{(i)}$ was chosen. It follows that $i = k$, and thus, $a = c$ as well.
\end{proof}

Theorem \ref{constructionalt} is most interesting when applied to a complete local ring that is not equidimensional.  In this case, the local UFD $A$ given by the theorem has infinitely many height one prime ideals $P$ such that $A/P$ is not catenary. More specifically, for each of these prime ideals $P$, and for every $i = 1,2, \ldots ,s$, $A/P$ has saturated chains of prime ideals of length $n_i - 1$ that start at the zero ideal of $A/P$ and end at the maximal ideal of $A/P$. We end this section with an example.

\begin{exa}\label{motivatingexample}
Let $T = k[[x,y,z,w,t]]/((x) \cap (y,z))$ where $k$ is a field. Then $T$ satisfies the conditions of Theorem \ref{constructionalt}.  The minimal prime ideals of $T$ are $(x)$ and $(y,z)$, and we have dim$(T/(x)) = 4$ and dim$(T/(y,z)) = 3$. By Theorem \ref{constructionalt}, $T$ is the completion of a UFD $A$ such that $A$ has infinitely many height one prime ideals $\{J_n\}_{n \in \mathbb{N}}$ satisfying the condition that, for every $n \in \mathbb{N}$, there is a saturated chain of prime ideals of length 3 starting at $J_n$ and ending at the maximal ideal of $A$, and there is a saturated chain of prime ideals of length 2 starting at $J_n$ and ending at the maximal ideal of $A$.  Moreover, all of these chains are disjoint (except at the maximal ideal of $A$).  As a consequence, $A/J_n$ is not catenary for every $n \in \mathbb{N}$. We note that one has some choice for the elements of the chains having coheight one. In the proof of Theorem \ref{constructionalt}, one can choose the prime ideals $Q_n^{(i)}$ to satisfy the required conditions.  The elements of the chains in $A$ of coheight one will be the prime ideals $Q_n^{(i)} \cap A$ of $A$. For example, for our given $T$, let $\{\alpha_n\}_{n \in \mathbb{N}}$ be distinct elements of $\mathbb{C}$. One could choose $Q_n^{(1)}$ to be $(x,y,w,t + \alpha_n z)$ and $Q_n^{(2)}$ to be $(y,z,w,t + \alpha_n x)$. In this case, the coheight one ideals in the chains of length 3 will be $(x,y,w,t + \alpha_n z) \cap A$ and the coheight one ideals in the chains of length 2 will be $(y,z,w,t + \alpha_n x) \cap A$. Furthermore, the generator $\xt_n$ of the height one prime ideal $J_n$ of $A$ will be a regular element of $(x,y,w,t + \alpha_n z) \cap (y,z,w,t + \alpha_n x)$.
\end{exa}

\section{A Very Noncatenary UFD} \label{noncatenaryUFD}

In this section, we construct a dimension four countable local UFD $A$ such that, for \textit{every} height one prime ideal $P$ of $A$, the ring $A/P$ is not catenary. To do this, we start with the complete local ring $$T = \mathbb{Q}[[x,y,z,w,t]]/((x) \cap (y,z)).$$ The ring $A$ will be a subring of $T$ with $\widehat{A} \cong T$. Therefore, $T$ is a faithfully flat extension of $A$, and this fact will help us show that $A$ satisfies our desired property. To show that $\widehat{A} \cong T$, we use the following result.

\begin{prop}[\cite{heitmann1994}, Proposition 1] \label{completionmachine}
Let $(R,R \cap M)$ be a quasi-local subring of a complete local ring $(T,M)$ such that the map $R \longrightarrow T/M^2$ is onto and $IT \cap R = I$ for every finitely generated ideal $I$ of $R$.  Then $R$ is Noetherian and the natural homomorphism $\widehat{R} \longrightarrow T$ is an isomorphism.
\end{prop}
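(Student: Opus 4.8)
Write $\mathfrak{m} = R \cap M$ for the maximal ideal of $R$. The plan is to derive, in order, two structural facts---that $R$ is Noetherian, and that $R/\mathfrak{m}^n \to T/M^n$ is an isomorphism for every $n$---and then read off the conclusion by passing to inverse limits; only that last step uses completeness of $T$. To see that $R$ is Noetherian, let $J$ be an ideal of $R$. The extension $JT$ is a finitely generated ideal of the Noetherian ring $T$, and since $JT = \sum_{j \in J} jT$ we may choose finitely many generators of $JT$ lying in $J$. Let $I \subseteq J$ be the ideal of $R$ they generate; then $IT = JT$, so hypothesis (2) gives $J \subseteq JT \cap R = IT \cap R = I \subseteq J$, whence $J = I$ is finitely generated.

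Next I would pin down the $\mathfrak{m}$-adic filtration of $R$. Because $R \to T/M^2$ is onto, $M = \mathfrak{m}T + M^2$, so Nakayama's lemma applied to the finitely generated $T$-module $M$ gives $M = \mathfrak{m}T$, and hence $M^n = \mathfrak{m}^n T$ for every $n \geq 1$. Since $R$ is now known to be Noetherian, each $\mathfrak{m}^n$ is a finitely generated ideal of $R$, so hypothesis (2) applies and yields $M^n \cap R = \mathfrak{m}^n T \cap R = \mathfrak{m}^n$; in particular the natural maps $R/\mathfrak{m}^n \to T/M^n$ are injective. To prove they are also surjective I would induct on $n$: the cases $n = 1, 2$ are immediate from surjectivity of $R \to T/M^2$ (compose with $T/M^2 \to T/M$ for $n = 1$). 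For the inductive step, note that $M^n/M^{n+1}$ is killed by $M$, hence is a module over $T/M = R/\mathfrak{m}$, generated there by the images of the degree-$n$ monomials in a finite generating set of $\mathfrak{m}$---and those monomials lie in $R$. So $M^n/M^{n+1}$ lies in the image of $R \to T/M^{n+1}$; combining this with surjectivity of $R \to T/M^n$ and the exact sequence $0 \to M^n/M^{n+1} \to T/M^{n+1} \to T/M^n \to 0$, an easy diagram chase gives surjectivity of $R \to T/M^{n+1}$. Thus $R/\mathfrak{m}^n \cong T/M^n$, compatibly in $n$.

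Passing to the inverse limit and using that $T$ is $M$-adically complete, $\widehat{R} = \varprojlim_n R/\mathfrak{m}^n \cong \varprojlim_n T/M^n = T$, and by construction this isomorphism is the natural homomorphism $\widehat{R} \to T$. The only real subtlety is the logical order: hypothesis (2) is assumed only for \emph{finitely generated} ideals, so one must first establish that $R$ is Noetherian (hence that $\mathfrak{m}$ and its powers are finitely generated) before one is entitled to apply (2) to $\mathfrak{m}^n$. Beyond that, the Nakayama step and the monomial-generation observation in the surjectivity induction are the only points needing a little care, and I do not anticipate a genuine obstacle.
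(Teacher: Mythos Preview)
The paper does not give its own proof of this proposition; it simply quotes it from Heitmann's 1994 paper and uses it as a black box. So there is nothing in the present paper to compare your argument against.

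That said, your proof is correct and is essentially the standard argument (and, as far as I recall, the one Heitmann gives). The logical ordering you flag---establishing Noetherianity \emph{before} applying the hypothesis $IT \cap R = I$ to the powers $\mathfrak{m}^n$---is exactly the point, and your surjectivity induction via $M^n/M^{n+1}$ is clean. One could shorten the surjectivity step slightly by noting that once $M = \mathfrak{m}T$ and $R \to T/M$ is onto, any $t \in M^n = \mathfrak{m}^n T$ can be written as $\sum a_i t_i$ with $a_i \in \mathfrak{m}^n$, and reducing each $t_i$ modulo $M$ using a lift in $R$ pushes the difference into $M^{n+1}$; but this is exactly what your monomial argument unpacks.
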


To show that $\widehat{A} \cong T$ using Proposition \ref{completionmachine}, we guarantee that the map $A \longrightarrow T/M^2$ is onto and $IT \cap A = I$ for every finitely generated ideal $I$ of $A$. We use the next lemma when constructing $A$ to ensure that is satisfies these two properties.  

\begin{lem}[\cite{simpson}, Lemma 3.7] \label{Austynpaper}
Let $(T,M)$ be a complete local ring with depth$T \geq 2$ and let $\mathfrak{p}$ be a nonmaximal prime ideal of $T$. Let $(R,R \cap M)$ be an infinite N-subring of $T$ with $R \cap \mathfrak{p} = (0)$ and let $u \in T$. Then there exists an N-subring $(S, S \cap M)$ of $T$ such that
\begin{enumerate}
    \item $R \subseteq S \subseteq T,$
    \item $u + M^2$ is in the image of the map $S \longrightarrow T/M^2$,
    \item $|S| = |R|,$
    \item $S \cap \mathfrak{p} = (0)$,
    \item prime elements of $R$ are prime in $S$, and
    \item for every finitely generated ideal $I$ of $S$, we have $IT \cap S = I$.
\end{enumerate}
\end{lem}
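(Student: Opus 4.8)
\medskip
\noindent\emph{Plan of proof.} The plan is to realize $S$ as the union of an ascending chain of N-subrings
$$R = R_0 \subseteq R_1 \subseteq \cdots \subseteq R_\alpha \subseteq \cdots \qquad (\alpha < \lambda)$$
indexed by an ordinal $\lambda$ with $|\lambda| = |R|$, where $R_1$ is obtained by a single adjunction arranging condition (2), the successor steps progressively ``close off'' the ring so as to force condition (6), the limit steps are unions, and every $R_\alpha$ satisfies $R_\alpha \cap \mathfrak p = (0)$. At the end, Lemma \ref{union} gives that $S = \bigcup_{\alpha<\lambda} R_\alpha$ is an N-subring with $|S| \le \max\{\aleph_0,|R|,|\lambda|\} = |R|$ (whence (3)) in which prime elements of $R$ remain prime (whence (5)); condition (4) is immediate from $R_\alpha \cap \mathfrak p = (0)$, condition (1) is built in, and condition (2), once achieved in $R_1$, persists since enlarging $S$ only enlarges the image of $S \to T/M^2$. (Here one checks in passing that the hypothesis of Lemma \ref{union} on initial segments of $\Omega$ is met, using the bound $|R|\le\max\{\aleph_0,|T/M|\}$ from the definition of N-subring.)

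\medskip
\noindent\emph{The step for (2).} Put $C = \mbox{Ass}\,T \cup \{P \mid P \in \mbox{Ass}(T/rT),\ 0 \ne r \in R\} \cup \{\mathfrak p\}$; since $\mbox{depth}\,T \ge 2$ we have $M \notin C$, and since every $P \in C$ is nonmaximal, $M^2 \not\subseteq P$, so $u + M^2 \not\subseteq P$ for every $P \in C$. Arguing exactly as in the proof of Lemma \ref{onegeneratingset} (prime avoidance via Lemma \ref{countableprimeavoidance}, applied to $I = M^2$ together with a suitable countable set of coset representatives), we find $m \in M^2$ with $x := u + m \notin P$ and $x + P$ transcendental over $R/(R\cap P)$ for all $P \in C$. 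Then Lemma \ref{loepp} produces the N-subring $R_1 = R[x]_{(M \cap R[x])}$ with $R_1 \cap \mathfrak p = (0)$, $|R_1| = |R|$, prime elements of $R$ prime in $R_1$, and $x \equiv u \pmod{M^2}$, so that $u + M^2$ lies in the image of $R_1 \to T/M^2$.

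\medskip
\noindent\emph{Closing off for (6).} Each successor step will adjoin only finitely many elements and localize, so every $R_\alpha$ has cardinality $|R|$, and there are at most $|R|$ pairs $(I,c)$ with $I$ a finitely generated ideal (given by a finite list of generators lying in some $R_\alpha$) and $c$ an element of that $R_\alpha$. Fix $\lambda$ accordingly and an enumeration of all such pairs, arranged by a standard bookkeeping so that every pair arising along the chain is treated at some later successor stage. At a successor stage $\alpha + 1$ handling $(I,c)$: if $c \in IR_\alpha$, set $R_{\alpha+1} = R_\alpha$; otherwise write $I = (a_1,\dots,a_n)R_\alpha$ and $c = \sum_i a_i t_i$ with $t_i \in T$. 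If $I$ is contained in some $P \in \mbox{Ass}\,T \cup \{\mathfrak p\}$, then $c \in IT \cap R_\alpha \subseteq P \cap R_\alpha = (0)$, contradicting $c \notin IR_\alpha$; if $I \subseteq P$ with $P \in \mbox{Ass}(T/rT)$ for some $0 \ne r \in R_\alpha$, then $c$ lies in the height-$\le 1$ prime $P \cap R_\alpha$ of the UFD $R_\alpha$, and factoring out the prime generating $P\cap R_\alpha$ (a nonzerodivisor of $T$, since it divides a nonzerodivisor of $T$) reduces to a pair of strictly smaller complexity, already treated at an earlier stage. We may thus assume $I$ lies in no prime of $C_\alpha := \mbox{Ass}\,T \cup \{P \mid P\in\mbox{Ass}(T/rT),\ 0\ne r\in R_\alpha\} \cup \{\mathfrak p\}$. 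Then Lemma \ref{countableprimeavoidance} yields $a \in I$ outside every $P \in C_\alpha$, and enlarging the generating set of $I$ by $a$ and relabelling, we may assume $a_n \notin P$ for all $P \in C_\alpha$. Now modify the chosen representation of $c$ by the syzygies of $(a_1,\dots,a_n)$: set $y_i := t_i + a_n s_i$ for $i < n$ and $y_n := t_n - \sum_{i<n} a_i s_i$, which preserves $c = \sum_i a_i t_i = \sum_i a_i y_i$. Choosing the $s_i$ successively by Lemma \ref{countableprimeavoidance} (as in Lemma \ref{onegeneratingset}), one arranges $y_1,\dots,y_{n-1}$ to be transcendental over the successively enlarged rings modulo all relevant primes, so that repeated use of Lemma \ref{loepp} gives a tower of N-subrings, each an A-extension of the previous and each missing $\mathfrak p$; adjoining finally $y_n$ and localizing yields $R_{\alpha+1}$, in which $c = \sum_i a_i y_i \in IR_{\alpha+1}$. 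At limit stages take $R_\alpha = \bigcup_{\beta<\alpha} R_\beta$, again an N-subring of cardinality $|R|$ by Lemma \ref{union}.

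\medskip
\noindent\emph{Assembly and the main obstacle.} With $S = \bigcup_{\alpha<\lambda} R_\alpha$, Lemma \ref{union} gives that $S$ is an N-subring with $|S| = |R|$, prime elements of $R$ prime in $S$, and $S \cap \mathfrak p = (0)$, and the first step gives $u + M^2$ in the image of $S \to T/M^2$. For (6): if $I = (b_1,\dots,b_m)S$ and $c \in IT \cap S$, then $b_1,\dots,b_m,c$ all lie in some $R_\beta$, so the pair $((b_1,\dots,b_m)R_\beta, c)$ is treated at some stage $\gamma \ge \beta$, whence $c \in (b_1,\dots,b_m)R_{\gamma+1} \subseteq I$; thus $IT \cap S = I$. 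The crux is the successor step: one must verify that $R_{\alpha+1}$ really is an N-subring, and in particular a UFD. The element $y_n$ is \emph{not} transcendental over the preceding ring, so Lemma \ref{loepp} does not apply to it, and one must argue directly that the extension remains factorial --- essentially that $R_\alpha[Y_1,\dots,Y_n]/(\sum_i a_iY_i - c)$ is a UFD --- exploiting the linearity of the single imposed relation together with the fact that $a_n$ avoids all the associated primes in play (a Nagata-type argument), and one must also handle carefully the case that $I$ lies inside an associated prime of some $T/rT$, the transfinite ordering being what legitimizes the reduction above. Carrying this out amounts to reproving in the present setting the closing-off argument underlying \cite{heitmann}; that verification, together with the bookkeeping, is the technical heart, everything else being routine manipulation of Lemmas \ref{countableprimeavoidance}, \ref{loepp}, and \ref{union}.
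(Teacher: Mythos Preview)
The paper does not prove this lemma; it is quoted verbatim from \cite{simpson} (Lemma 3.7 there) and used as a black box. So there is no in-paper proof to compare your attempt against. That said, your outline is the standard Heitmann strategy: adjoin a single transcendental to hit the prescribed coset of $M^2$, then run a transfinite closing-off process to force $IT\cap S=I$, taking unions at limits and invoking Lemma~\ref{union} at the end. This is indeed how the result is proved in the literature.

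Two remarks on the sketch itself. First, a cardinality issue: you invoke Lemma~\ref{countableprimeavoidance} with
\[
C=\mbox{Ass}\,T\cup\{P\in\mbox{Ass}(T/rT):0\ne r\in R\}\cup\{\mathfrak p\},
\]
but that lemma, as stated in this paper, requires $C$ and $D$ to be \emph{countable}. The hypothesis here only says $R$ is an infinite N-subring, so $|R|$ may be any cardinal strictly below $|T/M|$ when $T/M$ is uncountable, and then $|C|$ can be uncountable as well. Heitmann's original paper has the stronger avoidance statement (essentially: the conclusion holds whenever $|C|,|D|<|T/M|$), and that is what one must cite to make your Step~(2) and each successor step go through in full generality. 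For the application in Section~\ref{noncatenaryUFD} this is moot, since there $T/M=\mathbb Q$ forces every N-subring to be countable.

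Second, you correctly flag the genuine obstacle: at the successor stage, after adjoining $y_1,\dots,y_{n-1}$ transcendentally, the last element $y_n$ satisfies the relation $\sum a_iy_i=c$, so Lemma~\ref{loepp} does not apply to it, and one must verify directly that the resulting localization is a UFD (and that conditions (ii), (iii) of the N-subring definition persist). This is precisely the content of Heitmann's Lemma~4 in \cite{heitmann}, and your Nagata-type description of the argument is accurate but not a proof. Likewise, the reduction you sketch when $I$ lies in some $P\in\mbox{Ass}(T/rT)$---factoring out the height-one generator and appealing to an ``earlier stage''---requires a well-founded induction that your bookkeeping does not yet set up; Heitmann organizes this via an induction on the number of generators of $I$ within a single step rather than across stages. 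None of this is wrong in spirit, but as written your proposal is an accurate road map rather than a proof.
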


In the next lemma, we show that, if $T$ is a complete local ring satisfying certain conditions, and $R$ is a countable N-subring of $T$, then we can enlarge $R$ to another countable N-subring $A$ of $T$ whose completion is $T$. Moreover, for a given nonmaximal ideal $\mathfrak{p}$ of $T$, if $R \cap \mathfrak{p} = (0)$, then $A\cap \mathfrak{p} = (0)$ as well.

\begin{lem}\label{bigUFDtheorem}
Let $(T,M)$ be a complete local ring containing the rationals with $T/M$ countable and depth$T \geq 2$. Let $\mathfrak{p}$ be a nonmaximal prime ideal of $T$. Let $(R,R \cap M)$ be a countable N-subring of $T$ with $R \cap \mathfrak{p} = (0)$. Then there exists a countable N-subring $(A, A \cap M)$ of $T$ such that $R \subseteq A$, prime elements of $R$ are prime in $A$, $A$ is Noetherian, $\widehat{A} \cong T$, and $A \cap \mathfrak{p} = (0)$.
\end{lem}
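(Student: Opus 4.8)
The plan is to construct $A$ as the union of a countable ascending chain of N-subrings of $T$, each obtained from the previous one by a single application of Lemma \ref{Austynpaper}, and then to recognize $A$ as Noetherian with completion $T$ via Proposition \ref{completionmachine}. A preliminary observation is that $T/M^2$ is countable: since $T$ is Noetherian, $M$ is finitely generated, so $M/M^2$ is a finitely generated module over the countable field $T/M$ and hence is countable, and $T/M^2$ is an extension of $T/M$ by $M/M^2$. Fix an enumeration $u_1,u_2,\dots$ of a complete set of representatives in $T$ for the cosets of $M^2$, and note that $R$ is infinite, so that Lemma \ref{Austynpaper} applies to $R$ and to every countable N-subring of $T$ containing $R$.

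Next I would build the chain recursively. Put $S_0=R$. Given a countable N-subring $S_{n-1}$ of $T$ with $S_{n-1}\cap\mathfrak{p}=(0)$ in which prime elements of $R$ are prime, apply Lemma \ref{Austynpaper} to $S_{n-1}$ and the element $u_n$ to obtain a countable N-subring $S_n$ with $S_{n-1}\subseteq S_n\subseteq T$, $S_n\cap\mathfrak{p}=(0)$, prime elements of $S_{n-1}$ (hence of $R$) prime in $S_n$, $u_n+M^2$ in the image of $S_n\to T/M^2$, and $IT\cap S_n=I$ for every finitely generated ideal $I$ of $S_n$. Each $S_n$ is then an A-extension of $S_{n-1}$, so Lemma \ref{union} applied with $\Omega=\N$ shows that $A:=\bigcup_n S_n$ is a countable N-subring of $T$ in which prime elements of $R$ remain prime; in particular $A$ is a UFD, and $A\cap\mathfrak{p}=\bigcup_n(S_n\cap\mathfrak{p})=(0)$.

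It then remains to verify the hypotheses of Proposition \ref{completionmachine} for the quasi-local subring $(A,A\cap M)$ of the complete local ring $(T,M)$. The map $A\to T/M^2$ is onto because every coset of $M^2$ equals $u_n+M^2$ for some $n$ and hence lies in the image of $S_n\to T/M^2$, which is contained in the image of $A\to T/M^2$. For the condition $IT\cap A=I$ with $I=(a_1,\dots,a_k)A$ a finitely generated ideal of $A$: given $x\in IT\cap A$, pick $m$ with $a_1,\dots,a_k,x\in S_m$ and set $I_m=(a_1,\dots,a_k)S_m$; since $I_mT=IT$ we get $x\in I_mT\cap S_m=I_m\subseteq I$, so $IT\cap A=I$. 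Proposition \ref{completionmachine} then yields that $A$ is Noetherian and $\widehat{A}\cong T$, which completes the proof.

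I do not anticipate a serious obstacle here: once Lemmas \ref{Austynpaper} and \ref{union} and Proposition \ref{completionmachine} are in hand, the argument is largely bookkeeping. The one step that needs a small argument is the last one, deducing the ideal-closure property $IT\cap A=I$ for $A$ from the corresponding property of the $S_n$; the key point is that a finite generating set of $I$ together with any chosen element of $IT\cap A$ all lie in a common $S_m$, which reduces the claim to the closure property already known for $S_m$.
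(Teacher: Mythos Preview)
Your proof is correct and follows essentially the same route as the paper's: enumerate $T/M^2$, iterate Lemma~\ref{Austynpaper} to build an ascending chain of countable N-subrings hitting each coset, take the union, and verify Proposition~\ref{completionmachine}. Your choice to start the chain at $S_0=R$ is in fact cleaner than the paper's, which begins with $R_0=\mathbb{Q}$ and then asserts $R\subseteq A$ without building $R$ into the chain; your version makes the inclusion $R\subseteq A$ and the preservation of primes from $R$ immediate.
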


\begin{proof}
Since $T/M$ is countable, $T/M^2$ is countable. Enumerate the elements of $T/M^2$ as $u_0 + M^2, u_1 + M^2, u_2 + M^2, \ldots ,$. Let $R_0 = \mathbb{Q}$ and note that $R_0$ is an infinite N-subring of $T$ with $R_0 \cap \mathfrak{p} = (0)$. Let $R_1$ be the N-subring of $T$ obtained from Lemma \ref{Austynpaper} so that $R_0 \subseteq R_1$, $u_0 + M^2$ is in the image of the map $R_1 \longrightarrow T/M^2$, $|R_1| = |R_0|$, $R_1 \cap \mathfrak{p} = (0)$, prime elements of $R_0$ are prime in $R_1$, and, for every finitely generated ideal $I$ of $R_1$, we have $IT \cap R_1 = I$. Use Lemma \ref{Austynpaper} using $R_1$ and $u_1$ to define $R_2$ so that $R_1 \subseteq R_2$, $u_1 + M^2$ is in the image of the map $R_1 \longrightarrow T/M^2$, $|R_2| = |R_1|$, $R_2 \cap \mathfrak{p} = (0)$, prime elements of $R_1$ are prime in $R_2$, and, for every finitely generated ideal $I$ of $R_2$, we have $IT \cap R_2 = I$. Continue to define $R_n$ for every $n \geq 3$ so that $R_{n - 1} \subseteq R_n$, $u_{n - 1} + M^2$ is in the image of the map $R_n \longrightarrow T/M^2$, $|R_n| = |R_{n - 1}|$, $R_n \cap \mathfrak{p} = (0)$, prime elements of $R_{n - 1}$ are prime in $R_n$, and, for every finitely generated ideal $I$ of $R_n$, we have $IT \cap R_n = I$.

Let $A = \bigcup_{j = 0}^{\infty} R_j$ and note that $R \subseteq A$. By Lemma \ref{union}, $A$ is a countable N-subring of $T$ and prime elements of $R$ are prime in $A$. By construction, $A \cap \mathfrak{p} = (0)$ and the map $A \longrightarrow T/M^2$ is onto. Now let $I$ be a finitely generated ideal of $A$ and let $c \in IT \cap A$. We have $I = (a_1, a_2, \ldots ,a_m)$ for some $a_i \in A$. Choose $N$ so that $c, a_1, a_2, \ldots ,a_m \in R_N$. Then $c \in (a_1, \ldots ,a_m)T \cap R_N = (a_1, \ldots ,a_m)R_N \subseteq I.$ It follows that $IT \cap A = I$. By Proposition \ref{completionmachine}, we have that $A$ is Noetherian and $\widehat{A} \cong T.$
\end{proof}

As mentioned previously, we begin our construction with the complete local ring $$T = \mathbb{Q}[[x,y,z,w,t]]/((x) \cap (y,z)).$$ The next three results demonstrate facts about this ring that we use to construct our final UFD.

\begin{lem}\label{findgoodQ}
Let $a$ be a nonzero regular element of the complete local ring $$T = \mathbb{Q}[[x,y,z,w,t]]/((x) \cap (y,z)).$$
Suppose that the ideal $(a,x)$ has a minimal prime ideal $P_1$ that does not contain $(y,z)$ and that the ideal $(a,y,z)$ has a minimal prime ideal $P_2$ that does not contain $(x)$. Then there exist prime ideals $Q_1$ and $Q_2$ of $T$ such that, for $i = 1,2$, we have $P_i \subsetneq Q_i$, $Q_i$ only contains one minimal prime ideal of $T$ and dim$(T/Q_i) = 1$.
\end{lem}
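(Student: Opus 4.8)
The plan is to pass to the two quotient domains $T/(x)\cong\mathbb{Q}[[y,z,w,t]]$ and $T/(y,z)\cong\mathbb{Q}[[x,w,t]]$ and, in each, to climb from $P_1$ (respectively $P_2$) up to a prime of coheight one that avoids the ``other'' minimal prime of $T$. First note that $T$ is reduced, so $\mbox{Ass}(T)=\{(x),(y,z)\}$, while $\dim T/(x)=4$ and $\dim T/(y,z)=3$. Since $a$ is regular it lies in neither associated prime, so its images $\bar a$ in $T/(x)$ and in $T/(y,z)$ are both nonzero. Because $P_1$ is a minimal prime of $(a,x)\supseteq xT$ with $(y,z)\not\subseteq P_1$, it follows that $(x)$ is the only minimal prime of $T$ contained in $P_1$ and that $P_1/(x)$ is a minimal prime of the nonzero principal ideal $\bar a\,(T/(x))$; hence $\mathrm{ht}\,P_1=1$ and, using that $T/(x)$ is a regular (so catenary and equidimensional) local ring of dimension $4$, $\dim T/P_1=3$. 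The same reasoning shows $(y,z)$ is the only minimal prime of $T$ contained in $P_2$ and $\dim T/P_2=2$.

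The main point is the following elementary fact, which I would isolate and prove by induction on $d$: if $D$ is a complete local domain with $\dim D=d\geq 1$ and $0\neq c\in D$, then $D$ has a prime ideal $\mathfrak Q$ with $c\notin\mathfrak Q$ and $\dim D/\mathfrak Q=1$. For $d=1$ take $\mathfrak Q=(0)$. For $d\geq 2$, recall that a Noetherian local domain of dimension at least $2$ has infinitely many height one primes: if $\mathfrak p_1,\dots,\mathfrak p_n$ were all of them, prime avoidance would produce an element $f$ of the maximal ideal lying in no $\mathfrak p_i$, and then a minimal prime of $fD$ would be a height one prime (Krull's principal ideal theorem, using that $D$ is a domain and $f\neq 0$) containing $f$, a contradiction. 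Only the finitely many minimal primes of $cD$ contain $c$, so we may choose a height one prime $\mathfrak p$ with $c\notin\mathfrak p$; then $D/\mathfrak p$ is a complete local domain of dimension $d-1$ (catenarity) in which the image of $c$ is nonzero, and the inductive hypothesis supplies the desired $\mathfrak Q\supseteq\mathfrak p$. Note that when $d\geq 2$ the resulting $\mathfrak Q$ is nonzero.

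Now assemble the answer. Choose $c_1\in(y,z)\setminus P_1$, possible since $(y,z)\not\subseteq P_1$, and apply the fact to $D=T/P_1$ (of dimension $3$) and the nonzero image of $c_1$ to obtain a prime $Q_1$ of $T$ with $P_1\subseteq Q_1$, $c_1\notin Q_1$, and $\dim T/Q_1=1$. Since $c_1\in(y,z)$ we get $(y,z)\not\subseteq Q_1$, while $(x)\subseteq P_1\subseteq Q_1$, so $(x)$ is the only minimal prime of $T$ contained in $Q_1$; and $P_1\subsetneq Q_1$ because $\dim T/P_1=3\neq 1$. Similarly, applying the fact to $D=T/P_2$ (of dimension $2$) and the nonzero image of $x$ (note $x\notin P_2$ since $(x)\not\subseteq P_2$) produces a prime $Q_2$ of $T$ with $P_2\subsetneq Q_2$, $x\notin Q_2$, and $\dim T/Q_2=1$; then $(x)\not\subseteq Q_2$ and $(y,z)\subseteq P_2\subseteq Q_2$, so $(y,z)$ is the only minimal prime of $T$ contained in $Q_2$. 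The one genuinely delicate requirement is having $Q_i$ be of coheight one \emph{and} miss the second minimal prime of $T$; this is exactly what the element-avoidance built into the fact above arranges, and is why a bare appeal to a chain-existence result such as Lemma \ref{chains} applied to $T/P_i$ would not by itself suffice.
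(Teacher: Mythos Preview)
Your proof is correct. Both your argument and the paper's hinge on the same counting idea: at each stage there are infinitely many candidate primes (infinitely many height-one primes in a local domain of dimension $\geq 2$, or infinitely many primes strictly between two given primes), while only finitely many of them can contain the ``forbidden'' minimal prime (since any such prime is a minimal prime of a fixed sum ideal). The difference is purely organizational. You abstract the climbing step into a clean inductive lemma about complete local domains (find a coheight-one prime avoiding a given nonzero element), and then apply it uniformly to $T/P_1$ and $T/P_2$. The paper instead handles the two cases directly and somewhat asymmetrically: for $P_2$ (coheight $2$) it picks any prime strictly between $P_2$ and $M$ avoiding $(x)$; for $P_1$ (coheight $3$) it first fixes an arbitrary coheight-one prime $J\supsetneq P_1$, then picks $I$ strictly between $P_1$ and $J$ avoiding $(y,z)$, and finally repeats the $P_2$-style step from $I$. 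Your packaging is tidier and would scale without change to higher-dimensional analogues; the paper's version is slightly more bare-hands but no shorter. Your closing remark that Lemma~\ref{chains} alone does not suffice is also on point: applied to the domain $T/P_i$ it says nothing about avoiding the other minimal prime of $T$, and applied to $T$ itself it produces a chain that need not pass through $P_i$.
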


\begin{proof}
Let $M = (x,y,z,w,t)$. Note that $P_1$ and $P_2$ contain only one minimal prime ideal of $T$ and that the chains $(x) \subsetneq P_1$ and $(y,z) \subsetneq P_2$ are saturated. Since $T$ is Noetherian and catenary, there are infinitely many prime ideals strictly between $P_2$ and $M$.  If $Q$ is such a prime ideal containing $(x)$, then $Q$ is a minimal prime ideal of $P_2 + (x)$, of which there are only finitely many.  Therefore, we can choose $Q_2$ to satisfy the conditions that $P_2 \subsetneq Q_2 \subsetneq M$ and $Q_2$ does not contain $(x)$.

There exists a prime ideal $J$ of $T$ such that $P_1 \subsetneq J \subsetneq M$ and dim$(T/J) = 1$. Since $T$ is catenary, $P_1 \subsetneq J$ is not saturated. There are infinitely many prime ideals strictly between $P_1$ and $J$.  If $I$ is such a prime ideal containing $(y,z)$, then $I$ is a minimal prime ideal of $P_1 + (y,z)$ of which there are only finitely many. Thus, there is a prime ideal $I$ of $T$ satisfying the conditions that $P_1 \subsetneq I \subsetneq J \subsetneq M$ and $I$ does not contain $(y,z)$. By a similar argument replacing $P_1$ by $I$, there exists a prime ideal $Q_1$ of $T$ satisfying the conditions that $P_1 \subsetneq I\subsetneq Q_1 \subsetneq M$ and $Q_1$ does not contain $(y,z)$.
\end{proof}

\begin{lem}\label{minimals}
Let $a$ be an element of the complete local ring $T = \mathbb{Q}[[x,y,z,w,t]]/((x) \cap (y,z))$
satisfying the condition that $a \not\in (x,y,z)$. Then every minimal prime ideal of the ideal $(a,x)$  does not contain $(y,z)$ and every minimal prime ideal of the ideal $(a,y,z)$ does not contain $(x)$.
\end{lem}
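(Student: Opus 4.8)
The plan is to push each claim down to a height computation inside a regular local quotient of $T$. Write $S = \mathbb{Q}[[x,y,z,w,t]]$, so $T = S/((x)\cap(y,z))$. Since $(x)\cap(y,z)$ lies in each of $(x)$, $(y,z)$, and $(x,y,z)$, I would first record the isomorphisms $T/xT \cong S/(x) \cong \mathbb{Q}[[y,z,w,t]]$, $T/(y,z)T \cong S/(y,z) \cong \mathbb{Q}[[x,w,t]]$, and $T/(x,y,z)T \cong S/(x,y,z) \cong \mathbb{Q}[[w,t]]$. In particular $xT$, $(y,z)T$, and $(x,y,z)T$ are prime ideals of $T$, and $(x,y,z)T$ properly contains both $xT$ and $(y,z)T$. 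Moreover, $a \notin (x,y,z)T$ forces $a \notin xT$ and $a \notin (y,z)T$, so the image $\bar a$ of $a$ is nonzero in each of the domains $B := T/xT$ and $C := T/(y,z)T$.

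For the first assertion, let $P_1$ be a minimal prime of $(a,x)T$ (if $(a,x)T = T$ there is nothing to prove). Since $x \in (a,x)T \subseteq P_1$, the ideal $P_1$ contains $xT$, and $P_1/xT$ is a minimal prime of the ideal $\bar a B$ of the four-dimensional regular local ring $B \cong \mathbb{Q}[[y,z,w,t]]$, with $\bar a \neq 0$. By Krull's principal ideal theorem, $\operatorname{ht}(P_1/xT) = 1$. If $P_1$ contained $(y,z)T$, then $P_1/xT$ would contain $(y,z)B$, a height-two prime of $B$, which is impossible for a height-one prime. Hence no minimal prime of $(a,x)T$ contains $(y,z)T$.

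For the second assertion, let $P_2$ be a minimal prime of $(a,y,z)T$ (again there is nothing to prove if $(a,y,z)T = T$). Since $(y,z)T \subseteq (a,y,z)T \subseteq P_2$, the prime $P_2/(y,z)T$ is a minimal prime of $\bar a C$ in the three-dimensional regular local ring $C \cong \mathbb{Q}[[x,w,t]]$, again with $\bar a \neq 0$, so $\operatorname{ht}(P_2/(y,z)T) = 1$. If $P_2$ contained $(x)T$, then $P_2/(y,z)T$ would contain the height-one prime $(x)C$, which equals $(x,y,z)T/(y,z)T$; hence $P_2/(y,z)T = (x)C$, i.e.\ $P_2 = (x,y,z)T$. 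But then $(a,y,z)T \subseteq (x,y,z)T$ yields $a \in (x,y,z)T$, contradicting the hypothesis. So no minimal prime of $(a,y,z)T$ contains $(x)T$.

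The only step requiring genuine care is the first paragraph: pinning down the quotient rings of $T$ and verifying that $(y,z)$ has height two in $\mathbb{Q}[[y,z,w,t]]$ while $(x)$ has height one in $\mathbb{Q}[[x,w,t]]$. After that the argument is merely Krull's principal ideal theorem together with the elementary fact that a prime of height one cannot strictly contain a prime of positive height, so I do not anticipate a real obstacle.
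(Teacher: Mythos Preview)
Your proof is correct and follows essentially the same approach as the paper: pass to the quotients $T/xT$ and $T/(y,z)T$, apply Krull's principal ideal theorem to see the relevant minimal primes have height one there, and derive a contradiction from the height of $(y,z)$ in the first quotient and from $P_2 = (x,y,z)T$ forcing $a \in (x,y,z)$ in the second. Your version is slightly more explicit in identifying the quotients as regular power series rings, but the logic is the same.
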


\begin{proof}
Let $P_1$ be a minimal prime ideal of $(a,x)$. Then in the ring $T/(x)$, the principal ideal theorem gives us that ht$(P_1/(x)) = 1$. Suppose $(y,z) \subseteq P_1$. Then $(x,y,z) \subseteq P_1$ and so in the ring $T/(x)$, $(x,y,z)/(x) \subseteq P_1/(x)$, and it follows that ht$((x,y,z)/(x) )\leq 1$. But $(x,y)/(x)$ is a prime ideal of $T/(x)$ and we have $(x)/(x) \subsetneq (x,y)/(x) \subsetneq (x,y,z)/(x)$, a contradiction.

Now let $P_2$ be a minimal prime ideal of $(a,y,z)$ and suppose that $(x) \subseteq P_2$. Then $(x,y,z) \subseteq P_2$. In the ring $T/(y,z)$, both of the prime ideals $P_2/(y,z)$ and $(x,y,z)/(y,z)$ have height one, and so $P_2/(y,z) = (x,y,z)/(y,z)$. Thus, $a + (y,z) \in (x,y,z)/(y,z)$ and we have that $a \in (x,y,z)$, a contradiction.
\end{proof}

\begin{lem}\label{ExampleQ}
Let $Q$ be a prime ideal of the complete local ring $T = \mathbb{Q}[[x,y,z,w,t]]/((x) \cap (y,z))$ satisfying dim$(T/Q) = 1$. Then, if $v$ is a nonzero regular element of $T$ and $P \in \mbox{Ass}(T/vT)$, we have $Q \not\subseteq P$.
\end{lem}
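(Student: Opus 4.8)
The plan is to recast the statement as a bound on $\operatorname{depth}(T_Q)$ and then exploit the explicit structure of $T$. First observe that $w,t$ is a regular sequence on $T$ (neither lies in the associated primes $(x),(y,z)$ of $T$, and in $T/wT\cong\Q[[x,y,z,t]]/(xy,xz)$ the element $t$ again avoids both associated primes), so $\operatorname{depth}(T)\ge 2$. Since $v$ is a nonzerodivisor, $\operatorname{depth}(T/vT)\ge 1$, hence $M\notin\mbox{Ass}(T/vT)$; thus any $P\in\mbox{Ass}(T/vT)$ has $P\ne M$. If $Q\subsetneq P$, prepending $Q$ to a maximal chain from $P$ to $M$ gives $\dim(T/Q)\ge\dim(T/P)+1$, forcing $\dim(T/P)=0$, i.e. $P=M$, a contradiction; so $Q\subseteq P$ implies $P=Q$. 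It therefore suffices to show $Q\notin\mbox{Ass}(T/vT)$. Since associated primes localize, this is equivalent to $QT_Q\notin\mbox{Ass}(T_Q/vT_Q)$, and as $v$ remains a nonzerodivisor in $T_Q$ this follows once we show $\operatorname{depth}(T_Q)\ge 2$: then $\operatorname{depth}(T_Q/vT_Q)\ge 1$ (or $T_Q/vT_Q=0$ if $v\notin Q$), so $QT_Q$ is not associated to $T_Q/vT_Q$.

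To prove $\operatorname{depth}(T_Q)\ge 2$ when $\dim(T/Q)=1$, I would use that $T$ is reduced with $(x)\cap(y,z)=(0)$, which gives the exact sequence
\[
0\longrightarrow T\longrightarrow T/(x)\oplus T/(y,z)\longrightarrow T/(x,y,z)\longrightarrow 0,
\]
where $T/(x)\cong\Q[[y,z,w,t]]$, $T/(y,z)\cong\Q[[x,w,t]]$, and $T/(x,y,z)\cong\Q[[w,t]]$ are regular, hence Cohen--Macaulay, equidimensional and catenary. As $T$ is reduced, $Q$ contains $(x)$ or $(y,z)$. If $(x,y,z)\not\subseteq Q$, then localizing the sequence at $Q$ annihilates $(T/(y,z))_Q$ and $(T/(x,y,z))_Q$ (resp. $(T/(x))_Q$ and $(T/(x,y,z))_Q$), identifying $T_Q$ with a localization of $\Q[[y,z,w,t]]$ (resp. $\Q[[x,w,t]]$) at a prime of coheight $1$; this is a regular local ring of dimension $3$ (resp. $2$), so $\operatorname{depth}(T_Q)\ge 2$.

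In the remaining case $(x,y,z)\subseteq Q$, localizing the sequence at $Q$ gives $0\to T_Q\to (T/(x))_Q\oplus(T/(y,z))_Q\to (T/(x,y,z))_Q\to 0$. Since $\dim(T/Q)=1$ and each quotient ring is regular and equidimensional, the localizations at $Q$ are regular local of dimensions $3$, $2$, $1$, hence of those depths; the middle term then has depth $\min(3,2)=2$ and the right term depth $1$, so the depth inequality for short exact sequences gives $\operatorname{depth}(T_Q)\ge\min\{2,\,1+1\}=2$, completing the argument. I expect the only real delicacy to be the height/coheight bookkeeping: because $T$ itself is not equidimensional one cannot simply add heights and coheights in $T$, so I would consistently pass to the regular, equidimensional quotients $T/(x)$, $T/(y,z)$, $T/(x,y,z)$ before computing any height; the localization of associated primes and the depth inequality are standard facts that cause no trouble.
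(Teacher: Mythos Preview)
Your argument is correct, but it takes a noticeably longer route than the paper. The paper simply observes that $\operatorname{depth} T = 3$ (not merely $\ge 2$): for instance $w,\,t,\,x+y$ is a regular sequence on $T$, since $x+y$ avoids both associated primes $(x)$ and $(y,z)$ of $T/(w,t)\cong\Q[[x,y,z]]/(xy,xz)$. From this one line the result is immediate: for regular $v$ one has $\operatorname{depth}(T/vT)=2$, so every $P\in\mbox{Ass}(T/vT)$ satisfies $\dim(T/P)\ge 2$ by the standard inequality $\operatorname{depth}(N)\le\dim(R/P)$ for $P\in\mbox{Ass}(N)$ (Matsumura, Theorem~17.2). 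Your own reduction shows $Q\subseteq P$ forces $P=Q$ or $P=M$, and both are now excluded.

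Your approach instead stops at $\operatorname{depth} T\ge 2$, localizes at $Q$, and computes $\operatorname{depth}(T_Q)\ge 2$ via the Mayer--Vietoris sequence $0\to T\to T/(x)\oplus T/(y,z)\to T/(x,y,z)\to 0$ and the depth lemma for short exact sequences. This is a valid and self-contained calculation, and it has the virtue of being a template that would adapt to other reduced rings glued from Cohen--Macaulay pieces where a long global regular sequence is less obvious. But for this particular $T$ the paper's global depth count is both shorter and sharper: had you pushed your regular sequence one step further (to $w,t,x+y$), the entire second half of your argument would collapse to the same two-line contradiction the paper gives.
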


\begin{proof}
Note that depth$T = 3$ and so if $v$ is a nonzero regular element of $T$ then  $(x,y,z,w,t) \not\in \mbox{Ass}(T/vT)$. So suppose that $Q \in \mbox{Ass}(T/vT)$ for some nonzero regular element $v$ of $T$. Then Theorem 17.2 in \cite{matsumura} gives us that the dimension of the ring $(T/vT)/(Q/vT) \cong T/Q$ is greater than or equal to the depth of the ring $T/vT$. This implies that the depth of $T/vT$ is at most one, contradicting that the depth of $T$ is 3.
\end{proof}

We are now equipped with the tools needed to prove the main result of this section.

\begin{thm}\label{everyheightone}
Let $T = \mathbb{Q}[[x,y,z,w,t]]/((x) \cap (y,z))$. There exists a local UFD $A$ such that $\widehat{A} \cong T$ and such that, for every height one prime ideal $P$ of $A$, $A/P$ is not catenary.
\end{thm}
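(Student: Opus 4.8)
The plan is to build $A$ as an increasing union $A=\bigcup_n A_n$ of countable N-subrings of $T$, with each $A_{n+1}$ an A-extension of $A_n$ (so prime elements persist and, by Lemma~\ref{union}, the union is again a countable N-subring), interleaving three kinds of steps under a bookkeeping function that visits every task infinitely often. First record what we need about $T=\mathbb{Q}[[x,y,z,w,t]]/((x)\cap(y,z))$: it is complete local, contains $\mathbb{Q}$, has countable residue field and $\mathrm{depth}\,T=3$, its minimal primes are $(x)$ and $(y,z)$, and $T/(x)\cong\mathbb{Q}[[y,z,w,t]]$, $T/(y,z)\cong\mathbb{Q}[[x,w,t]]$ are regular local rings of dimensions $4$ and $3$. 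The \emph{completion steps} enumerate $T/M^{2}$ and repeatedly apply Lemma~\ref{Austynpaper} (with the fixed nonmaximal prime $\mathfrak{p}=(x)$, for which $A_n\cap(x)=(0)$ holds automatically since $(x)\in\mathrm{Ass}\,T$); in the limit they make $A\to T/M^{2}$ onto and $IT\cap A=I$ for every finitely generated ideal $I$ of $A$, so Proposition~\ref{completionmachine} forces $A$ to be Noetherian with $\widehat A\cong T$. The \emph{UFD steps} follow the construction in the proof of Theorem~8 of \cite{heitmann}, started from $A_0=\mathbb{Q}$: this makes $A$ a UFD by designating a sequence of transcendental, hence prime, elements $\pi_1,\pi_2,\dots$ and arranging that every nonzero nonunit of $A$ is a unit times a product of the $\pi_j$, so that \emph{every} prime element of $A$ is an associate of some $\pi_j$. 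A prime element of $A$ avoids every associated prime of $T$ by N-subring condition~(2), so $\pi_j\notin(x)\cup(y,z)$; when designating $\pi_j$ I would impose the single extra condition $\pi_j\notin(x,y,z)$, which is a prime-avoidance requirement handled exactly as in Lemmas~\ref{loepp} and~\ref{transcendental} via Lemma~\ref{countableprimeavoidance}.

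The third kind of step, the \emph{chain steps}, is carried out right after $\pi_j$ is designated. Pick a minimal prime $P_1^{(j)}$ of $(\pi_j,x)$ and a minimal prime $P_2^{(j)}$ of $(\pi_j,y,z)$. By Lemma~\ref{minimals} (using $\pi_j\notin(x,y,z)$) we get $(y,z)\not\subseteq P_1^{(j)}$ and $(x)\not\subseteq P_2^{(j)}$, and since $(x)$ and $(y,z)$ are minimal primes of $T$ it follows that $P_1^{(j)},P_2^{(j)}$ are height one primes of $T$ that are minimal over $\pi_j T$, with $\dim T/P_1^{(j)}=3$ and $\dim T/P_2^{(j)}=2$ (computed in the regular local rings $T/(x)$ and $T/(y,z)$). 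Applying Lemma~\ref{findgoodQ} with $a=\pi_j$ gives primes $Q_1^{(j)}\supsetneq P_1^{(j)}$ and $Q_2^{(j)}\supsetneq P_2^{(j)}$, each containing a unique minimal prime of $T$ and with $\dim T/Q_i^{(j)}=1$; by Lemma~\ref{ExampleQ} and the fact that $Q_i^{(j)}$ contains neither $(x)$ nor $(y,z)$, the hypotheses of Lemma~\ref{onegeneratingset} hold with $\mathfrak{p}=(x)$, and I would enlarge the current ring to a countable N-subring containing generating sets for $Q_1^{(j)}$ and $Q_2^{(j)}$ while keeping all prime elements prime.

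With $A=\bigcup_n A_n$ in hand, $A$ is a countable local Noetherian UFD with $\widehat A\cong T$, it contains a generating set for each $Q_i^{(j)}$ (hence $(A\cap Q_i^{(j)})T=Q_i^{(j)}$ and $\dim A/(A\cap Q_i^{(j)})=\dim T/Q_i^{(j)}=1$), and $A\cap P_i^{(j)}=\pi_j A$ for $i=1,2$ (condition~(3) gives $\mathrm{ht}(A\cap P_i^{(j)})\le1$ since $P_i^{(j)}$ is minimal over the regular element $\pi_j$, and a height one prime of the UFD $A$ containing the prime $\pi_j$ must equal $\pi_j A$). Now let $P$ be a height one prime of $A$ and write $P=\pi A$ with $\pi=\pi_j$. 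Choose $p\in(A\cap Q_1^{(j)})\setminus\pi A$ (possible since $\pi A\subsetneq A\cap Q_1^{(j)}$, their extensions to $T$ being $\pi T\ne Q_1^{(j)}$) and let $W$ be a minimal prime of $P_1^{(j)}+pT$ contained in $Q_1^{(j)}$; then $\mathrm{ht}_T W=2$, and going down for the faithfully flat extension $A\subseteq T$, together with $p\in(W\cap A)\setminus\pi A$ and the generating-set property, yields $\mathrm{ht}_A(\pi A)=1$, $\mathrm{ht}_A(W\cap A)=2$, $\mathrm{ht}_A(Q_1^{(j)}\cap A)=3$, and $\mathrm{ht}_A(M\cap A)=\dim A=4$. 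Hence
\[(0)\subsetneq\pi A\subsetneq W\cap A\subsetneq Q_1^{(j)}\cap A\subsetneq M\cap A\]
has successive heights $0,1,2,3,4$, so its first three links are forced to be saturated, and its last link is saturated because $\dim A/(A\cap Q_1^{(j)})=1$; this is a saturated chain of length $4$. Likewise
\[(0)\subsetneq\pi A\subsetneq Q_2^{(j)}\cap A\subsetneq M\cap A\]
has successive heights $0,1,2,4$ (using $\mathrm{ht}_T Q_2^{(j)}=2$), its first two links are saturated, and its last link is saturated because $\dim A/(A\cap Q_2^{(j)})=1$; this is a saturated chain of length $3$. Reducing both chains modulo $P=\pi A$ produces, in $A/P$, a saturated chain of length $3$ and a saturated chain of length $2$ joining the zero ideal to the maximal ideal, so $A/P$ is not catenary.

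I expect the main obstacle to be the construction rather than the chain count: one must open up the proof of Theorem~8 of \cite{heitmann} and verify that every prime element of the resulting $A$ is indeed an associate of a designated transcendental element, and that the designation step has room to also impose $\pi_j\notin(x,y,z)$ and to be followed by the (routine, by Lemma~\ref{onegeneratingset}) adjunction of generating sets for $Q_1^{(j)}$ and $Q_2^{(j)}$, all while remaining a countable N-subring and an A-extension at every stage. The genuinely new input specific to this $T$ is packaged in Lemmas~\ref{findgoodQ}--\ref{ExampleQ}; the remaining height and saturation bookkeeping in the previous paragraph is routine given faithful flatness of $T$ over $A$ and the identities $(A\cap Q_i^{(j)})T=Q_i^{(j)}$.
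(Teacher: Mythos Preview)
Your approach differs from the paper's in a way that leaves a genuine gap. You rely on the claim that Heitmann's Theorem~8 construction ``designates'' a sequence of primes $\pi_1,\pi_2,\dots$ such that every prime element of the final ring $A$ is an associate of some $\pi_j$; you yourself flag this as the main obstacle, and rightly so. Heitmann's construction does not work that way: the N-subring/union machinery guarantees that $A$ is a UFD, but the prime elements of $A$ are not an explicit list produced along the way --- they emerge only once $A$ is built. Without an enumeration of the primes of $A$ during the construction you cannot perform a ``chain step'' for each one, and your device of forcing $\pi_j\notin(x,y,z)$ only at a designation moment does nothing for primes that were never designated.

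The paper sidesteps this with a different organization. It takes $\mathfrak p=(x,y,z)$ rather than $(x)$ and uses Lemma~\ref{bigUFDtheorem} to build, at each stage, a \emph{Noetherian} UFD $A_n$ with $\widehat{A_n}\cong T$ and $A_n\cap(x,y,z)=(0)$. Because $A_n$ is countable and Noetherian, its height-one primes can be enumerated as $a_1A_n,a_2A_n,\dots$; and since $A_n\cap(x,y,z)=(0)$, every $a_j$ automatically satisfies $a_j\notin(x,y,z)$, so Lemma~\ref{minimals} applies with no extra bookkeeping. One then adjoins generating sets for the corresponding $Q_{1,a_j},Q_{2,a_j}$ (via Lemma~\ref{generatingsets}) and reapplies Lemma~\ref{bigUFDtheorem} to get $A_{n+1}$. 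The final $A=\bigcup A_n$ handles every prime $a$ of $A$ because the prime factorization of $a$ in any $A_N$ containing it is also its factorization in $A$ (primes persist along A-extensions), so $a$ is already prime in $A_N$ and its chains were built by stage $N+1$. This is the idea you are missing: rather than trying to list the primes of $A$ during construction, make each intermediate ring itself a Noetherian UFD with completion $T$, list \emph{its} primes, and process those.

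Your final chain-counting paragraph is essentially what the paper does, but the assertion that ``successive heights $0,1,2,3,4$ force the first three links to be saturated'' is not a pure height argument: knowing $\mathrm{ht}_A(W\cap A)=2$ and $\mathrm{ht}_A(Q_1^{(j)}\cap A)=3$ does not by itself rule out a prime strictly between them. The paper (as in Theorem~\ref{constructionalt}) closes this using going down together with the facts that $T$ is catenary and each $Q$ in the $T$-chain contains a unique minimal prime of $T$, so a prime of $T$ lying over a hypothetical intermediate prime of $A$ and contained in $Q_1^{(j)}$ is forced to coincide with $Q_1^{(j)}$.
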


\begin{proof}
Let $R_0 = \mathbb{Q}$, let $\mathfrak{p} = (x,y,z)$, and let $M = (x,y,z,w,t)$. Observe that $R_0$ is an infinite N-subring of $T$ and $R_0 \cap \mathfrak{p} = (0)$. Use Lemma \ref{bigUFDtheorem} to find a countable N-subring $A_0$ of $T$ such that $A_0$ is Noetherian, $\widehat{A}_0 \cong T$, and $A_0 \cap \mathfrak{p} = (0)$. Since $\widehat{A}_0 \cong T$, the map $A_0 \longrightarrow T/M^2$ is onto and $IT \cap A_0 = I$ for every finitely generated ideal $I$ of $A_0$.

Because $A_0$ is a Noetherian UFD, all of its height one prime ideals are principal.  Enumerate the height one prime ideals of $A_0$ by $a_1A_0, a_2A_0, \ldots ,a_nA_0, \ldots.$ Note that, for all $j \geq 1$, we have $a_j \not\in \mathfrak{p}$. For $j \geq 1$, let $P_{1,a_j}$ be a minimal prime ideal (in $T$) of $(a_j,x)$ and let $P_{2,a_j}$ be a minimal prime ideal of $(a_j,y,z)$. By Lemma \ref{minimals}, $P_{1,a_j}$ does not contain $(y,z)$ and $P_{2,a_j}$ does not contain $(x)$. By Lemma \ref{findgoodQ}, there are prime ideals $Q_{1,a_j}$ and $Q_{2,a_j}$ of $T$ such that $P_{1,a_j} \subseteq Q_{1,a_j}$, $P_{2,a_j} \subseteq Q_{2,a_j}$, $Q_{1,a_j}$ and $Q_{2,a_j}$ only contain one minimal prime ideal of $T$, dim$(T/Q_{1,a_j}) = 1$, and dim$(T/Q_{2,a_j}) = 1$. Since dim$(T/\mathfrak{p})> 1$, we have $Q_{1,a_j} \not\subseteq \mathfrak{p}$ and $Q_{2,a_j} \not\subseteq \mathfrak{p}$ for all $j \geq 1.$ Note that if $P \in  \mbox{Ass}(T) = \{(x), (y,z)\}$, then $Q_{1,a_j} \not\subseteq P$ and $Q_{2,a_j} \not\subseteq P$ for all $j \geq 1.$ By Lemma \ref{ExampleQ}, if $v$ is a nonzero regular element of $T$ and $P \in \mbox{Ass}(T/vT)$, we have $Q_{1,a_j} \not\subseteq P$ and $Q_{2,a_j} \not\subseteq P$ for every $j \geq 1.$  Use Lemma \ref{generatingsets} to find a countable N-subring $R_1$ of $T$ such that $A_0 \subseteq R_1$, $R_1 \cap \mathfrak{p} = (0)$, prime elements in $A_0$ are prime in $R_1$, and for every $j \in \mathbb{N}$, $R_1$ contains a generating set for $Q_{1,a_j}$ and for $Q_{2,a_j}$. 

By Lemma \ref{bigUFDtheorem} there exists a countable N-subring $A_1$ of $T$ such that $R_1 \subseteq A_1$, prime elements in $R_1$ are prime in $A_1$, $A_1$ is Noetherian, $\widehat{A}_1 \cong T$, and $A_1 \cap \mathfrak{p} = (0)$. Note that $IT \cap A_1 = I$ for every finitely generated ideal $I$ of $A_1$. Also note that prime elements in $A_0$ are prime in $A_1$. Repeat the procedure in the previous paragraph replacing $A_0$ with $A_1$ to find a countable N-subring $R_2$ of $T$ such that $A_1 \subseteq R_2$, $R_2 \cap \mathfrak{p} = (0)$ and prime elements in $A_1$ are prime in $R_2$. Moreover, for every height one prime ideal $aA_1$ of $A_1$, $R_2$ contains a generating set for prime ideals $Q_{1,a}$ and $Q_{2,a}$ of $T$ where $Q_{1,a}$ contains a minimal prime ideal of $(a,x)$, $Q_{2,a}$ contains a minimal prime ideal of $(a,y,z)$, $Q_{1,a}$ and $Q_{2,a}$ contain only one minimal prime ideal of $T$, dim$(T/Q_{1,a}) = 1$, and dim$(T/Q_{2,a}) = 1$.  Continue to form a countably infinite chain of N-subrings $R_0 \subseteq A_0 \subseteq R_1 \subseteq A_1, \subseteq \cdots$ of $T$.

Define $A = \bigcup_{j = 0}^{\infty}A_j.$ By Lemma \ref{union}, $A$ is a countable N-subring of $T$ and elements that are prime in $A_j$ for some $j \geq 0$ are also prime in $A$. Since $A$ is an N-subring, it is a UFD. By construction, the map $A \longrightarrow T/M^2$ is onto and $IT \cap A = I$ for every finitely generated ideal $I$ of $A$. By Proposition \ref{completionmachine}, $A$ is Noetherian and $\widehat{A} \cong T$. 

Let $P$ be a height one prime ideal of $A$. Since $A$ is a UFD, $P$ is principal, and so $P = aA$ for some $a \in A$. Choose $N$ so that $a \in A_N$. Suppose that $a = p_1p_2 \cdots p_m$ is the prime factorization of $a$ in $A_N$. Then $p_1, p_2, \ldots ,p_m$ are all prime elements in $A$, and so $a = p_1p_2 \ldots p_m$ is also the prime factorization of $a$ in $A$.  As $a$ is prime in $A$, we have that $m = 1$ and $a = p_1$. It follows that $a$ is prime in $A_N$. Thus $A_{N+ 1}$, and hence $A$, contains a generating set for prime ideals $Q_{1}$ and $Q_{2}$ of $T$ where $Q_{1}$ contains a minimal prime ideal $P_1$ of $(a,x)$, $Q_{2}$ contains a minimal prime ideal $P_2$ of $(a,y,z)$, $Q_{1}$ and $Q_{2}$ contain only one minimal prime ideal of $T$, dim$(T/Q_{1}) = 1$, and dim$(T/Q_{2}) = 1$. Note that $P_1$ and $P_2$ are both height one prime ideals of $T$.
Thus, ht$(A \cap P_1) = 1$ and ht$(A \cap P_2) = 1$. Therefore, $A \cap P_1 = aA$ and $A \cap P_2 = aA$. The rest of the argument is similar to the argument in the proof of Theorem \ref{constructionalt}, and so we omit some of the detailed explanations. The completion of $A/(A \cap Q_1)$ is $T/Q_1$ and it follows that dim$(A/(A \cap Q_1)) = 1$. Similarly, dim$(A/(A \cap Q_2)) = 1$. Therefore, $aA \subsetneq A \cap Q_1$ and $aA \subsetneq A \cap Q_2.$ Let $b \in A \cap Q_1$ with $b \not\in aA$ and let $c \in A \cap Q_2$ with $c \not\in aA$. Let $J_1$ be a minimal prime ideal of $P_1 + bT$ contained in $Q_1$ and let $J_2$ be a minimal prime ideal of $P_2 + cT$ contained in $Q_2$. Then $J_2 = Q_2$, $(0) \subsetneq aA \subsetneq A \cap J_1$ is saturated and $(0) \subsetneq aA \subsetneq A \cap J_2 = A \cap Q_2 \subsetneq A \cap M$ is saturated. We now have $J_1 \subsetneq Q_1$ and so $A \cap J_1 \subsetneq A \cap Q_1$. Let $d \in A \cap Q_1$ with $d \not\in J_1$, and let $I$ be a minimal prime ideal of $J_1 + dT$ contained in $Q_1$. Then $I = Q_1$ and we have that $(0) \subsetneq aA \subsetneq A \cap J_1 \subsetneq A \cap I = A \cap Q_1 \subsetneq A \cap M$ is saturated. It follows that $A/aA = A/P$ is not catenary.
\end{proof}



\newpage
\bibliography{refs}
\bibliographystyle{plain}
\end{document}